\newtheorem{theorem}{Theorem}[section]
\newtheorem{lemma}[theorem]{Lemma}
\newtheorem{corollary}[theorem]{Corollary}
\theoremstyle{definition}
\newtheorem{remark}[theorem]{Remark}
\newtheorem{example}[theorem]{Example}
\newcommand{\FPT}{$\mathsf{FPT}$~}
\newcommand{\NPh}{\hbox{$\mathsf{NP}$-hard~}}
\newcommand{\coNPc}{$\mathsf{co}$-$\mathsf{NP}$-complete~}
\newcommand{\W}{{\cal S}}
\newif\ifDissertation
\newcommand{\problemdef}[3]{
	\begin{center}
		\begin{boxedminipage}{.99\textwidth}
			\textsc{{#1}}\\[2pt]
			\begin{tabular}{ r p{0.8\textwidth}}
				\textbf{~~~~Instance:} & {#2}\\
				\textbf{Question:} & {#3}
			\end{tabular}
		\end{boxedminipage}
	\end{center}
}
\newcommand\WCW[1]{{\rm WCW}(#1)}
\newcommand\wcdim[1]{{\rm wcdim}(#1)}
\newcommand{\C}{\mathcal{C}}
\newcommand{\G}{\mathcal{G}}
\newcommand*\linenomathpatch[1]{%
  \cspreto{#1}{\linenomath}%
  \cspreto{#1*}{\linenomath}%
  \csappto{end#1}{\endlinenomath}%
  \csappto{end#1*}{\endlinenomath}%
}
\newcommand*\linenomathpatchAMS[1]{%
  \cspreto{#1}{\linenomathAMS}%
  \cspreto{#1*}{\linenomathAMS}%
  \csappto{end#1}{\endlinenomath}%
  \csappto{end#1*}{\endlinenomath}%
}
  \let\linenomathAMS\linenomathWithnumbers
  \patchcmd\linenomathAMS{\advance\postdisplaypenalty\linenopenalty}{}{}{}
  \let\linenomathAMS\linenomathNonumbers
\title{Computing Well-Covered Vector Spaces of Graphs\\ Using Modular Decomposition}
\author{Martin Milani{\v c}\\
\small FAMNIT and IAM, University of Primorska\\
\small \texttt{martin.milanic@upr.si}\\
\and
Nevena Piva\v c\\
\small FAMNIT and IAM, University of Primorska\\
\small \texttt{nevena.pivac@iam.upr.si}
}
\begin{document}
\maketitle

\begin{abstract}
A graph is well-covered if all its maximal independent sets have the same cardinality.
This concept was introduced by Plummer in 1970 and naturally generalizes to the weighted case.
Given a graph $G$, a real-valued vertex weight function $w$ is said to be a well-covered weighting of $G$ if all its maximal independent sets are of the same weight with respect to~$w$.
The set of all well-covered weightings of a graph $G$ forms a vector space over the field of real numbers, called the \emph{well-covered vector space} of $G$.
Since the problem of recognizing well-covered graphs is $\mathsf{co}$-$\mathsf{NP}$-complete, the problem of computing the well-covered vector space of a given graph is $\mathsf{co}$-$\mathsf{NP}$-hard.
Levit and Tankus showed in 2015 that the problem admits a polynomial-time algorithm in the class of claw-free graphs.
In this paper, we give two general reductions for the problem, one based on anti-neighborhoods and one based on modular decomposition, combined with Gaussian elimination.
Building on these results, we develop a polynomial-time algorithm for computing the well-covered vector space of a given fork-free graph, generalizing the result of Levit and Tankus.
Our approach implies a polynomial-time recognition algorithm for the class of well-covered fork-free graphs and also generalizes some known results on cographs.

\bigskip
\noindent{\bf Keywords:} weighted well-covered graph, well-covered vector space,  modular decomposition, Gaussian elimination, fork-free graph, cograph

\medskip
\noindent{\bf MSC Codes (2020):}
05C85 (Primary) 05C69, 05C75, 05C50, 15A03 (Secondary)
\end{abstract}

\section{Introduction}

An \emph{independent set} in a graph $G$ is a set of pairwise nonadjacent vertices.
An independent set in a graph $G$ is said to be \textit{maximum} if it has maximum cardinality and \textit{maximal} if it is not contained in any larger independent set.
The problem of finding a maximum independent set in a given graph, known as \textsc{Maximum Independent Set}, is one of the classical \NPh problems~\cite{MR0378476}.
While every maximum independent set in a graph is also a maximal one, the opposite implication does not hold in general.
If every maximal independent set in a graph $G$ is also a maximum one, the graph $G$ is said to be \textit{well-covered}.
Well-covered graphs were introduced by Plummer in 1970~\cite{zbMATH03310754} and have been extensively studied in the literature (see~\cite{MR1677797} for an introduction and~\cite{MR1254158} for a survey).
One of the motivations for the study of well-covered graphs stems from the fact that \textsc{Maximum Independent Set} is solvable in linear time in the class of well-covered graphs by a simple greedy algorithm that computes a maximal independent set.

Two central research directions on well-covered graphs are the study of their recognition and their characterizations in special graph classes.
As proved independently by Sankaranarayana and Stewart in 1992~\cite{zbMATH00039777} and by Chv\'atal and Slater in 1993~\cite{zbMATH00434906}, the recognition of well-covered graphs is $\mathsf{co}$-$\mathsf{NP}$-complete.
In Plummer's survey from 1993 (see~\cite{MR1254158}) one can find results on various restrictions of the well-coveredeness property defining special subclasses of well-covered graphs, as well as an overview of the study of well-coveredeness versus the girth and the maximum degree.
After Plummer's survey, the study of well-covered graphs focused mostly on the recognition problem in special cases.
In particular, Caro, Seb\H{o}, and Tarsi showed that the recognition of well-covered graphs remains \coNPc even for $K_{1,4}$-free graphs~\cite{zbMATH00845871}, Brown and Hoshino established $\mathsf{co}$-$\mathsf{NP}$-completeness for the class of circulant graphs~\cite{MR2739910}, and a careful examination of the reduction due to Sankaranarayana and Stewart~\cite{zbMATH00039777} shows that the problem remains \coNPc in the class of weakly chordal graphs, that is, graphs such that neither the graph nor its complement contain an induced cycle of length at least five.
On the positive side, Tankus and Tarsi showed that the problem is polynomial-time solvable in the class of claw-free graphs (see~\cite{zbMATH00934795, zbMATH01006762}).
The well-coveredness property can also be tested efficiently in the classes of bipartite graphs~\cite{zbMATH03614795,zbMATH03799694}, graphs with girth at least $5$ \cite{zbMATH00130578}, graphs without cycles of lengths $4$ and $5$~\cite{zbMATH00682532}, chordal graphs~\cite{zbMATH00845475}, graphs of bounded degree~\cite{zbMATH01200865}, perfect graphs with bounded clique number~\cite{MR1264476}, various generalizations of the class of cographs~\cite{Klein2013,Araujo2019}, and graphs of bounded cliquewidth~\cite{MR3851408}.
The problem has also been studied from the parameterized complexity point of view, by Alves, Dabrowski, Faria, Klein, Sau, and Souza~\cite{MR3851408} and Ara\'{u}jo, Costa, Klein, Sampaio, and Souza~\cite{Araujo2019}.

In this paper we focus on a weighted generalization of well-coveredness.
If every vertex of a graph $G$ is assigned a real number, that is, the \emph{weight} of a vertex, we speak about a \emph{weighted graph}.
\textsc{Maximum Weight Independent Set} is the problem of computing an independent set of maximum weight in a given weighted graph, where the weight of a set of vertices is defined as the sum of the weights of its members.
Given a graph $G$ and a weight function $w:V(G)\to \mathbb{R}$, a graph $G$ is said to be \emph{\hbox{$w$-well-covered}} if all maximal independent sets in $G$ are of the same weight with respect to the weight function $w$.
The concept of $w$-well-covered graphs was introduced by Caro, Ellingham, and Ramey in 1998~\cite{zbMATH01200865}, in the more general context of weight functions mapping the vertices of a graph to the elements of an abelian group (see also~\cite{zbMATH05029664}).
Graphs that are \hbox{$w$-well-covered} with respect to some \emph{nonnegative} weight function $w:V(G)\to \mathbb{R}_+$ that is not identically equal to $0$ are exactly the complements of the so-called \emph{stochastic} graphs studied already in 1983 by Berge~\cite{zbMATH03813669}, and generalize the \emph{equistable} graphs introduced in 1980 by Payan~\cite{MR553649} and defined as the graphs that admit a weight function $w:V(G)\to \mathbb{R}_+$ such that a set $S\subseteq V(G)$ is a maximal independent set if and only if the total weight of $S$ equals $1$.

Given a graph $G$, a \emph{well-covered weighting} of $G$ is any real-valued weight function $w$ on the vertices of $G$ such that $G$ is $w$-well-covered.
For every graph $G$, the set $\WCW{G}$ of all well-covered weightings of $G$ forms a vector space over the field of real numbers (see~\cite{zbMATH01334515,zbMATH05029664}); we refer to it as the \emph{well-covered vector space} of $G$.
Similar vector spaces can be defined for more general situations, for example for hypergraphs and for vertex weight functions that assign to each vertex of $G$ a value from some fixed field $\mathbb{F}$ (see Caro and Yuster~\cite{zbMATH01334515} and Brown and Nowakowski~\cite{zbMATH05029664}).
In this paper we restrict our attention to the case of graphs and the field of real numbers.
Any system of equations representing the vector space $\WCW{G}$ will be referred to as a \emph{well-covering system} of $G$.
(Precise definitions will be given in~\Cref{sec:prelim}.)
We consider the following problem.

\problemdef{{Well-Covering System}}{A graph $G=(V,E)$.}{Compute a well-covering system of $G$.}

A graph is well-covered if and only if the vertex weight function that is constantly equal to $1$ belongs to the \hbox{well-covered} vector space of the graph.
Therefore, since the problem of recognizing well-covered graphs is $\mathsf{co}$-$\mathsf{NP}$-complete, the more general \textsc{Well-Covering System} problem is  $\mathsf{co}$-$\mathsf{NP}$-hard.

The \emph{well-covered dimension} of $G$ is denoted by $\wcdim{G}$ and defined as the dimension of the well-covered vector space of $G$.
Clearly, a graph $G$ has well-covered dimension equal to zero if and only if the only well-covered weighting of $G$ is the identically zero function.
Such graphs are known to exist; for instance, the Petersen graph and any cycle with at least $8$ vertices are among them (see~\cite{zbMATH01200865,zbMATH01334515}).
However, to the best of our knowledge, the complexity of computing the well-covered dimension of a graph is open, even in the special case of recognizing graphs with positive well-covered dimension.
Caro and Yuster proved that the well-covered dimension of a tree is equal to the number of leaves~\cite{zbMATH01334515}.
Brown and Nowakowski generalized this result to the class of chordal graphs~\cite{zbMATH05029664} by showing that in this case the well-covered dimension equals the number of simplicial cliques.
They also showed that the well-covered dimension can be computed in polynomial time for cographs, for graphs with independence number at most two, and for chordal graphs.
The well-covered dimension of certain product graphs was studied by Birnbaum, Kuneli, McDonald, Urabe, and Vega~\cite{MR3268692} and for Levi graphs of point-line configurations by Hauschild, Ortis, and Vega~\cite{MR3404664}.

Well-covered vector spaces of graphs containing no cycles of length $4$ were studied by Brown, Nowakowski, and Zverovich~\cite{brown2007structure}.
\textsc{Well-Covering System} can be solved in polynomial time in classes of graphs of bounded vertex degree, as shown by Caro, Ellingham, and Ramey~\cite{zbMATH01200865}, in the class of graphs with girth at least $7$, as shown by Caro and Yuster~\cite{zbMATH01334515}, and, as shown by Levit and Tankus, in the class of claw-free graphs~\cite{levit2015weighted} and in the class of graphs without cycles of lengths $4$, $5$, and $6$~\cite{MR3325553}.

\subsection*{Our results and relation with existing works}

In this paper, we give two general reductions for the \textsc{Well-Covering System} problem, one based on modular decomposition and one based on anti-neighborhoods.
Building on these results, we develop a polynomial-time algorithm for solving the problem in the class of fork-free graphs, thereby generalizing the analogous result of Levit and Tankus on claw-free graphs~\cite{levit2015weighted}.
The algorithm decomposes a given fork-free graph $G$ into a polynomial number of induced subgraphs of $G$ and recursively computes a well-covering system for every graph $H$ constructed at some step of the decomposition of $G$.
To keep the well-covering systems polynomially bounded in size, Gaussian elimination is applied at each step.
In the base case, when the subgraph $H$ cannot be decomposed further, we use a structural result on fork-free graphs due to Lozin and Milani\v{c}~\cite{MR2373852,lozin2008polynomial}  (see also~\cite{MR4276974}) to infer that $H$ is claw-free; hence, in this case the algorithm of Levit and Tankus applies.

The class of fork-free graphs generalizes the class of cographs, hence our results generalize the result of Brown and Nowakowski~\cite{zbMATH05029664} that the well-covered dimension of cographs can be computed in polynomial time.
Furthermore, our reduction involving modular decomposition generalizes the analogous reduction for the (unweighted) well-covered graphs due to Klein, de Mello, and Morgana~\cite{Klein2013}, who used modular and primeval decompositions to develop efficient algorithms for the problem of recognizing well-covered graphs in several extensions of the class of cographs.

\begin{sloppypar}
Our main result, a polynomial-time algorithm for computing a well-covering system of a given fork-free graph, is another example of an application of the structural result from~\cite{MR2373852,lozin2008polynomial} relating fork-free graphs to claw-free graphs (the exact statement of the result is given in~\Cref{thm:fork-free}).
This result was first used for developing a polynomial-time algorithm for \textsc{Maximum Weight Independent Set}  in the class of fork-free graphs~\cite{MR2373852,lozin2008polynomial}.
More recently it has been used by Dyer, Martin, Jerrum, M\"{u}ller, and Vu{\v{s}}kovi\'{c}~\cite{MR4276974} for developing a fully polynomial randomized approximation scheme (FPRAS)  for the problem of counting all weighted independent sets in a (fork, odd hole)-free graph~\cite{MR4276974} and by D{\k{e}}bski, Lonc, Okrasa, Piecyk, and Rz{\k{a}}{\.z}ewski~\cite{debski2022computing} for developing a polynomial-time algorithm for \textsc{$W_5$-Coloring} (a certain homomorphism problem) in the class of fork-free graphs.
\end{sloppypar}

\subsection*{Structure of the paper}

In~\Cref{sec:prelim} we collect the necessary definitions and preliminaries, including preliminaries on modular decomposition and precise definitions about well-covering systems.
In~\Cref{sec:prime} we show how to use modular decomposition, combined with Gaussian elimination, to reduce the problem of computing a well-covering system of a graph to the same problem on certain prime induced subgraphs of the graph.
In~\Cref{sec:cographs} we develop an algorithm for computing a well-covering system of a given cograph that is faster than the algorithm following from the main result of~\Cref{sec:prime}.
In~\Cref{sec:anti-neighborhoods} we show how to reduce the problem of computing a well-covering system of a graph to the same problem on the subgraphs of a given graph obtained by deleting the closed neighborhood of some vertex.
In~\Cref{sec:fork-free} we apply the results from~\Cref{sec:prime,sec:anti-neighborhoods} to develop a polynomial-time algorithm for computing a well-covering system of a given fork-free graph.
We conclude the paper with a summary and some open problems in~\Cref{sec:conclusion}.

\section{Preliminaries}\label{sec:prelim}

Given a positive integer $n$, we denote by $[n]$ the set $\{1,\ldots, n\}$ (and  $[0] := \emptyset$).
All graphs in this paper are finite, simple, and undirected.
Given a graph $G$, we denote by $V(G)$ the vertex set of $G$ and by $E(G)$ the edge set of $G$.
Two vertices $u$ and $v$ are \textit{adjacent} in $G$ if $uv\in E(G)$.
The \textit{neighborhood} of a vertex $v$ in $G$ is the set of all vertices adjacent to $v$ in $G$ and it is denoted by $N_G(v)$.
The \textit{closed neighborhood} of $v$ is defined as $N_G[v]=N_G(v)\cup \{v\}$.
Given a subset $S$ of vertices in $G$, we denote by $G[S]$ the graph induced by $S$, that is, the subgraph of $G$ with vertex set $S$ and edge set $\{uv: u,v\in S, uv\in E(G)\}$.
A \emph{connected component} of $G$ is a maximal connected subgraph of $G$.
The \emph{complement} of a graph $G = (V,E)$ is a graph $\overline{G}$ with vertex set $V$ in which two distinct vertices are adjacent if and only if they are nonadjacent in $G$.
A \emph{co-connected component}, or simply \emph{co-component}, of $G$ is the subgraph of $G$ induced by the vertex set of a connected component of the complement of $G$.
A graph is \emph{co-connected} if its complement is connected.
Given two disjoint subsets $A$ and $B$ of $V(G)$, we say that $A$ and $B$ are \emph{complete} (resp., \emph{anticomplete}) to each other in $G$ if
$\{ab : a \in A, b \in B\} \subseteq E(G)$ (resp., $\{ab : a \in A, b \in B\} \cap E(G) = \emptyset)$.

For integers $m,n\ge 0$ we denote by $K_{m,n}$ the \emph{complete bipartite graph} with parts of sizes $m$ and $n$, that is, the graph whose vertices can be partitioned into two independent sets $A$ and $B$, such that $|A|=m$, $|B|=n$, and $A$ and $B$ are complete to each other.
A \emph{claw} is the graph $K_{1,3}$.
A \emph{fork} is the graph obtained from a claw by a single subdivision of one of its edges, that is, the graph with vertex set $\{v_1,v_2,v_3,v_3,v_5\}$ and edge set $\{v_1v_2,v_2v_3,v_3v_4,v_3v_5\}$.
By $P_n$ we denote the $n$-vertex \emph{path graph}, that is, a graph whose vertices can be linearly ordered so that two vertices are adjacent if and only if they appear consecutively in the ordering.
Given two graphs $G$ and $H$, the graph $G$ is said to be \emph{$H$-free} if it contains no induced subgraph isomorphic to $H$.

A \emph{rooted tree} is a pair $(T,r)$ where $T$ is a tree and $r\in V(T)$ is the \emph{root} of $T$.
Given two nodes $u$ and $v$ in a rooted tree $T$, we say that $v$ is a \emph{child} (or \emph{successor}) of $u$ if $uv\in E(T)$ and $u$ belongs to the unique $v,r$-path in $T$.
A \emph{leaf} of a rooted tree $T$ is a node without any successors, while an \emph{internal node} of $T$ is a node that is not a leaf.
Note that if $T$ is a one-vertex rooted tree, then the unique vertex in $T$ is both the root and a leaf of $T$, but it is not an internal node.
Given a rooted tree $T$, we denote by $\ell(T)$ the number of leaves of $T$ and by $i(T) = |V(T)|-\ell(T)$ the number of internal nodes of $T$.
We will need the following well-known property of rooted trees.
To keep the paper self-contained, we include a proof.

\begin{lemma}\label{leaves in trees}
Let $T$ be a tree in which each internal node has at least two successors.
Then $\ell(T)\ge i(T)+1$.
\end{lemma}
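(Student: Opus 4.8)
The plan is to prove the inequality by a direct double counting of the edges of $T$. We regard $T$ as rooted (the notions of successor, leaf, and internal node are taken with respect to a fixed root $r$), and count $|E(T)|$ in two ways.

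First I would use the fact that, since $T$ is a tree, $|E(T)| = |V(T)| - 1$; concretely, the map sending each vertex $v \neq r$ to the edge joining $v$ to its parent is a bijection between $V(T) \setminus \{r\}$ and $E(T)$. On the other hand, every vertex other than $r$ is the successor of a unique node, and that node, having a successor, is necessarily internal. Hence, writing $c(v)$ for the number of successors of an internal node $v$, the non-root vertices are partitioned by their (internal) parents, giving $|V(T)| - 1 = \sum_{v} c(v)$, where the sum ranges over all internal nodes of $T$.

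Now the hypothesis that $c(v) \ge 2$ for every internal node $v$ yields $|V(T)| - 1 \ge 2\,i(T)$, that is, $|V(T)| \ge 2\,i(T) + 1$; substituting $|V(T)| = \ell(T) + i(T)$ and rearranging gives $\ell(T) \ge i(T) + 1$, as claimed. The only point requiring a little care is the degenerate case in which $T$ is a single vertex: then $i(T) = 0$, both counts above are empty sums, and the inequality reads $1 \ge 1$. Otherwise the argument is a pure counting argument with no real obstacle; an alternative route would be induction on $i(T)$, removing an internal node all of whose successors are leaves together with those successors, but the double-counting proof is shorter and avoids the case analysis.
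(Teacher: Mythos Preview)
Your proof is correct and takes a genuinely different route from the paper's. The paper argues by induction on $i(T)$: when $i(T)\ge 1$ the root $r$ is internal with $d\ge 2$ children, deleting $r$ yields rooted subtrees $T_1,\ldots,T_d$, the induction hypothesis gives $\ell(T_j)\ge i(T_j)+1$ for each $j$, and summing produces $\ell(T)=\sum_j \ell(T_j)\ge \sum_j i(T_j)+d=(i(T)-1)+d\ge i(T)+1$. Your double-counting argument, identifying $|V(T)|-1$ with $\sum_{v\text{ internal}} c(v)\ge 2\,i(T)$, is shorter, avoids recursion, and makes the equality case (every internal node having exactly two children) immediately visible. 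The paper's inductive proof has the mild advantage of paralleling the recursive way the modular decomposition tree is later traversed, but for the lemma itself your approach is the cleaner one. Incidentally, the alternative induction you sketch at the end (removing an internal node whose successors are all leaves) differs from the paper's: the paper always removes the root.
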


\begin{proof}
By induction on $i(T)$.
If $i(T) = 0$, then the unique vertex in $T$ is a leaf and the inequality holds.
Let now $T$ be a tree with $i(T)\ge 1$ such that each internal node of $T$ has at least two successors, and assume that every tree $T'$ with $i(T')<i(T)$ in which each internal node has at least two successors satisfies that $\ell(T')\ge i(T')+1$.
Let $r$ be the root of $T$ and let $d$ be the number of successors of $r$.
Since $i(T)\ge 1$, the root of $T$ is an internal node.
Hence $d\ge 2$.
Let $T_1,\ldots, T_d$ be the rooted trees obtained by the deletion of $r$ from $T$, where for each $j\in [r]$, the root of $T_j$ is the unique successor of $r$ in $T_j$.
Then for all $j\in [r]$ we have that $i(T_j) < i(T)$, and by the induction hypothesis every $T_j$ satisfies that $\ell(T_j)\ge i(T_j)+1$.
Observe that every internal node of $T_j$, $j\in [d]$, is also internal in $T$, and node $r$ is internal in $T$ as well, so we have
$i(T)=1+\sum_{j=1}^d i(T_j)$.
Since $\ell(T)=\sum_{j=1}^d\ell(T_j)$, we conclude that
$\ell(T)\ge \sum_{j=1}^d \left(i(T_j)+1\right)= \sum_{j=1}^d i(T_j) + d \ge
(i(T)-1) + 2=i(T)+1$,
which completes the proof.
\end{proof}

\begin{corollary}\label{edges of the modular tree}
Let $T$ be a tree in which each internal node has at least two successors.
Then $|E(T)|\le 2\ell(T)-2$.
\end{corollary}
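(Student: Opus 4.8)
The plan is to derive the bound directly from \Cref{leaves in trees} together with the elementary fact that a tree on $n$ vertices has exactly $n-1$ edges. The only thing to check is that the hypotheses of \Cref{leaves in trees} transfer verbatim, which they do: $T$ is precisely a tree in which every internal node has at least two successors, so the lemma applies without modification.

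First I would observe that the vertex set $V(T)$ is partitioned into the set of leaves and the set of internal nodes, so that $|V(T)| = \ell(T) + i(T)$. Next I would invoke \Cref{leaves in trees} to get $\ell(T) \ge i(T) + 1$, equivalently $i(T) \le \ell(T) - 1$. Substituting, $|V(T)| = \ell(T) + i(T) \le \ell(T) + (\ell(T) - 1) = 2\ell(T) - 1$. Finally, since $|E(T)| = |V(T)| - 1$ for any tree, we conclude $|E(T)| \le 2\ell(T) - 2$, as claimed.

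There is essentially no obstacle here; the statement is an immediate numerical consequence of the previous lemma. The one point worth a sentence of care is the degenerate case where $T$ has a single vertex: then $\ell(T) = 1$, $i(T) = 0$, $|E(T)| = 0$, and the bound reads $0 \le 0$, so it holds. I would also note in passing that, as in \Cref{leaves in trees}, "internal node" and "successor" are understood with respect to a fixed rooting of $T$ (which is the setting in which this corollary will be used, $T$ being a modular decomposition tree), so no separate choice of root needs to be made.
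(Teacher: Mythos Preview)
Your proof is correct and follows essentially the same approach as the paper: apply \Cref{leaves in trees} to get $i(T)\le \ell(T)-1$, deduce $|V(T)|\le 2\ell(T)-1$, and then use $|E(T)|=|V(T)|-1$. The extra remarks on the one-vertex case and the rooting are fine but not needed.
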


\begin{proof}
By \cref{leaves in trees}, we have $\ell(T)\ge i(T)+1$.
Therefore, $|V(T)| = i(T) +\ell(T)\le 2\ell(T)-1$ and consequently
$|E(T)| = |V(T)|-1 \le 2\ell(T)-2$.
\end{proof}

\subsection{Modular decomposition}\label{MD}

Given a graph $G$ and a nonempty set $M\subseteq V(G)$, we say that $M$ is a \emph{module} in $G$ if every vertex not in $M$ is either adjacent to all vertices in $M$ or to none of them.
If $M_1$ and $M_2$ are two disjoint modules in a graph $G$, then either $G$ contains all possible edges between $M_1$ and $M_2$ in $G$, or none of them.
A module $M$ is \emph{maximal} if $M\subset V(G)$ and there is no module $M'$ in $G$ with $M\subset M'\subset V(G)$.
If $G$ and its complement are both connected, then any two maximal modules in $G$ are disjoint; in particular, the set of maximal modules of $G$ forms a partition of $V(G)$.
A module $M$ of a graph $G$ is said to be \emph{strong} if for every other module $M'$ in $G$ it holds that either $M\cap M'= \emptyset$, $M\subseteq M'$, or $M'\subseteq M$.
A graph $G$ is \textit{prime} if each of its maximal strong modules is a singleton.

Every graph with at least two vertices has a unique partition of its vertex set into maximal strong modules (see, e.g.,~\cite{habib2010survey}).
If $G$ is disconnected, then the partition is given by the vertex sets of its components; if the complement of $G$ is disconnected, then the partition is given by the vertex sets of its co-components.
The \emph{representative graph} $R(G)$ of $G$ is any induced subgraph of $G$ obtained by taking an arbitrary but fixed vertex from each maximal strong module of $G$.
Note that the representative graph of $G$ depends on how the vertices from the maximal strong modules are chosen; however, any two such graphs are isomorphic to each other, which explains the notation $R(G)$.
The representative graph of $G$ is a special case of the following more general construction.
Given a graph $G$ and an arbitrary partition $\mathcal{P} = \{M_1,\dots, M_k\}$ of $V(G)$ into modules of $G$, we denote by $G/{\mathcal{P}}$ the corresponding \emph{quotient graph}, which is the induced subgraph of $G$ obtained by taking one vertex from each module $M_j\in \mathcal{P}$.

Partitioning the vertex set of a graph $G$ recursively into maximal strong modules leads to the so-called \emph{modular decomposition} of $G$, represented with the so-called \emph{modular decomposition tree}.
This is a rooted tree $T_G$ such that every node of $T_G$ is labeled with an induced subgraph $H_t$ of $G$, and every internal node of $T_G$ is of one of the types \textit{parallel}, \textit{series}, or \textit{prime}.
The tree $T_G$ is defined recursively as follows.
\begin{itemize}
    \item If $G$ is the one-vertex graph, then $T_G$ has one node $t$, labeled with $H_t = G$, and $t$ is the root of $T_G$.
\item Otherwise, $T_G$ is the rooted tree obtained by creating a root node $r$, labeling the root by the representative graph of $G$ (that is, setting $H_r = R(G)$), and joining the root $r$ with edges to the roots of the modular decomposition trees $T_1,\ldots, T_k$ of the subgraphs of $G$ induced by the maximal strong modules $M_1,\ldots, M_k$ of $G$.
The root node of $G$ is of type \emph{parallel} if $G$ is disconnected, \emph{series} if the complement of $G$ is disconnected, and \textit{prime} if both $G$ and its complement are connected.
Each internal node $t$ of $T_G$ with $t\neq r$ belongs to a unique tree $T_i$ and its type in $T_G$ is the same as in~$T_i$.
\end{itemize}
Given a graph $G$, the modular decomposition tree $T_G$ of $G$ can be computed in linear time (see~\cite{MR1687819,MR2500307}).
By construction, for every node $t\in V(T_G)$, the subtree of $T_G$ rooted at $t$ is the modular decomposition tree of the subgraph $G_t$ of $G$ induced by the vertices appearing in the one-vertex subgraphs labeling the leaves of this subtree.
Furthermore, if the node $t$ is of type prime, then the graph $H_t$ labeling the node is a prime graph.

\subsection{Well-covering systems}

A \textit{weighted graph} is a pair $(G,w)$ where $G$ is a graph and $w\in \mathbb{R}^{V(G)}$, that is, $w:V(G)\to \mathbb{R}$ is a real-valued vertex weight function.
Given a weighted graph $(G,w)$ and a set $S\subseteq V(G)$, the weight of $S$ (with respect to $w$) is defined as $w(S)=\sum_{v\in S} w(v)$.
Given a set $S\subseteq V(G)$, we denote by $w_S$ the \emph{restriction of $w$ to $S$}, that is, the function $w_{S}: S\to \mathbb{R}$ defined by setting $w_{S}(v)=w(v)$ for all $v\in S$.

Given a weighted graph $(G,w)$, we say that $w$ is a \emph{well-covered weighting} of $G$ and that $G$ is \emph{$w$-well-covered} if all maximal independent sets in $G$ have the same weight with respect to $w$, that is, for every two maximal independent sets $I$ and $I'$ in $G$, we have $w(I) = w(I')$.
Recall that for every graph $G$, the set $\WCW{G}$ of all well-covered weightings of $G$ forms a vector space over the field of real numbers, called the \emph{well-covered vector space} of $G$.
Since we only work with finite graphs, the well-covered vector space $\WCW{G}$ is always finite-dimensional and thus has a finite basis (an inclusion-wise maximal linearly independent set of vectors); furthermore, all bases of $\WCW{G}$ have the same cardinality, which is referred to as the well-covered dimension of $G$.
Clearly, for every graph $G$, its well-covered dimension is an integer between $0$ and $|V(G)|$.

Well-covered vector spaces of graphs can also be represented using systems of linear equations.
Let $G$ be a graph with $n$ vertices.
Fix an arbitrary ordering $v_1,\ldots, v_n$ of the vertices of $G$ and an arbitrary ordering $I_1,\ldots, I_k$ of all maximal independent sets in $G$.
By definition, a weight function $w:V(G)\to \mathbb{R}$ is a well-covered weighting of $G$ if and only if $w$ satisfies the  following system of ${k\choose 2}$ equations:
\begin{equation}\label{eq:WCW1}
w(I_i)-w(I_{j})=0 \quad \textrm{for any two distinct }i,j\in [k]\textrm{ with }i<j\,.
\end{equation}
To distinguish between vectors of abstract variables of a system and vectors of their concrete real values, we use the following convention throughout the paper.
To each vertex \hbox{$v\in V(G)$} we associate a variable $x_v$, and write the systems of equations using such variable names.
For example, following this convention, the system \eqref{eq:WCW1} corresponds to the following homogeneous linear system over the set of  variables $\{x_v : v\in V(G)\}$:
\begin{equation}\label{eq:WCW-x}
\sum_{v\in I_i}x_v-\sum_{v\in I_j}x_v=0\quad \textrm{for any two distinct }i,j\in [k]\textrm{ with }i<j\,.
\end{equation}
This system can be compactly represented with a single matrix equation
\[Ax = 0_r\]
where $r = {k\choose 2}$, $A\in \mathbb{R}^{r\times n}$ is the coefficient matrix, and the right-hand side $0_r$ is the all-zero vector in $\mathbb{R}^{r}$.
Thus, a column vector $w = (w(v_1),\ldots, w(v_n))^\top\in \mathbb{R}^n$ belongs to the well-covered vector space $\WCW{G}$ if and only if $Aw = 0_r$.

There are many ways to represent the well-covered vector space of a given graph $G$ with a linear system.
For example, a system equivalent to~\eqref{eq:WCW-x} with $k-1$ equations can be obtained by requiring that all maximal independents sets have the same weight as an arbitrary but fixed maximal independent set, say $I_k$:
\begin{equation}\label{eq:WCW2}
\sum_{v\in I_i}x_v-\sum_{v\in I_k}x_v=0\quad \textrm{for all }i\in [k-1]\,.
\end{equation}
Another equivalent system, also with $k-1$ equations, is the following:
\begin{equation}\label{eq:WCW3}
\sum_{v\in I_i}x_v-\sum_{v\in I_{i+1}}x_v=0\quad \textrm{for all }i\in [k-1]\,.
\end{equation}
A \emph{well-covering system} of $G$ is any system $\W$ of linear homogeneous equations over a set $\{x_v : v\in V(G)\}$ of variables indexed by the vertices of $G$ such that a column vector $w = (w(v_1),\ldots, w(v_n))^\top\in \mathbb{R}^n$ belongs to the well-covered vector space $\WCW{G}$ if and only if it satisfies all the equations of the system.
Given a well-covering system $\W$ of $G$, we denote by $|\W|$ the \emph{size} of $\W$, that is, the number of equations in $\W$.
As shown by systems~\eqref{eq:WCW-x} and~\eqref{eq:WCW2}, the same graph can admit well-covering systems of different sizes.

We will soon illustrate these concepts with a concrete example, but first let us discuss two important remarks about properties of well-covering systems.

\medskip
\noindent{\bf A remark on the size of well-covering systems.}
The number of maximal independent sets in an $n$-vertex graph can be exponential in $n$.\footnote{For example, the $2n$-vertex graph consisting of $n$ isolated edges
has $2^n$ maximal independent sets.}
However, using Gaussian elimination it can be shown that any well-covering system of an $n$-vertex graph admits a well-covering subsystem of size at most $n$ (see~\Cref{lem:reduced-well-covering}).

Consider an arbitrary well-covering system $\W$ of an $n$-vertex graph $G$ and let $r$ be the size of $\W$.
Fix an arbitrary ordering of the vertices of $G$ and an arbitrary ordering of the equations in $\W$.
Let $A\in \mathbb{R}^{r\times n}$ be the coefficient matrix of $\W$.
We say that a well-covering system $\W$ is \emph{linearly independent} if the rows of the corresponding matrix $A$ are linearly independent over the field of real numbers.
In this case, the $r$ rows of $A$ form a basis of the orthogonal complement of the vector space $\WCW{G}$, and hence by standard linear algebra we have $r+\wcdim{G} = n$.
In particular, in this case we have $r\le n$, and equality holds if and only if $\wcdim{G} = 0$, that is, the all-zero weighting is the only well-covered weighting of~$G$.

\medskip
\noindent{\bf A remark on the coefficients of well-covering systems.}
Since we consider the well-covered vector space $\WCW{G}$ of a graph $G$ as a vector space over the field of real numbers, any well-covering system of $G$ consists of linear equations involving real numbers as coefficients.
However, it often suffices to work with well-covering systems whose coefficients belong to a particular subset of the set of real numbers.
We say that a well-covering system is \emph{unit} if the matrix of the system has all the coefficients in the set $\{-1,0,1\}$, \emph{integer} if the system consists of linear equations involving only integer coefficients, and \emph{rational} if it consists of linear equations involving only rational coefficients.
Note that systems \eqref{eq:WCW-x}, \eqref{eq:WCW2}, and \eqref{eq:WCW3} are all unit.
Furthermore, the well-covering systems of fork-free graphs constructed by the algorithm given by our main result (\cref{thm:main}) are also unit.

\begin{example}
Let $G$ be the \emph{bull graph}, that is, the graph obtained from the $5$-vertex path with vertices $v_1,\ldots, v_5$ in order along the path by adding to it the edge $v_2v_4$.
Then $G$ has exactly three maximal independent sets: $I_1=\{v_1,v_4\}$, $I_2=\{v_2,v_5\}$, and  $I_3=\{v_1,v_3,v_5\}$.
Any well-covered weighting $w$ of $G$ must satisfy that $w(I_1)=w(I_2)=w(I_3)$, or equivalently, $w(I_1)-w(I_3)=0$ and $w(I_2)-w(I_3)=0$.
This yields the following linearly independent unit well-covering system $\W$ of $G$ with size $r = 2$:
\begin{alignat*}{6}
   &  &   & {}-{} & x_{v_3} & {}+{} & x_{v_4} & {}-{} & x_{v_5} & {}={} & 0\\
  -x_{v_1} & {}+{} & x_{v_2}  & {}-{} & x_{v_3} & & & & & {}={} & 0\,.
\end{alignat*}
Using this system of equations we can easily determine for any weighting $w$ of $G$ whether it is well-covered weighting or not.
For example, letting \[\mathcal{B} = \{(1,1,0,0,0)^\top, (0,1,1,1,0)^\top, (0,0,0,1,1)^\top\}\,,\] it can be easily verified that each $w=(w(v_1),\ldots,w(v_5))^\top\in \mathcal{B}$ satisfies both equations in $\W$ and thus belongs to the space $\WCW{G}$.
Furthermore, since the two rows of the coefficient matrix of the system $\W$, that is, $(0,0,-1,1,-1)$ and $(-1,1,-1,0,0)$, form a basis of the orthogonal complement of the well-covered vector space, it follows that the well-covered dimension of the space $\WCW{G}$ equals $|V(G)|-r = 3$.
Thus, since the vectors in the set $\mathcal{B}$ are linearly independent, we infer that $\mathcal{B}$ is a basis of the well-covered vector space $\WCW{G}$.\hfill$\blacktriangle$
\end{example}

In some of our results, including the reduction based on modular decomposition (\Cref{thm:modular+Gauss}), the following lemma based on Gaussian elimination will be useful.
We denote by $\omega < 2.373$ the matrix multiplication exponent~(see, e.g.,~\cite{MR4262465}).

\begin{sloppypar}
\begin{lemma}\label{lem:reduced-well-covering}
Given an $n$-vertex graph $G$ and a rational well-covering system $\widehat{\W}$ of $G$, one can compute in time $\mathcal{O}(n^{\omega-1}|\widehat{\W}|)$ a linearly independent well-covering system $\W\subseteq \widehat{\W}$ of $G$ such that $|\W|\le \min\{n,|\widehat{\W}|\}$.
\end{lemma}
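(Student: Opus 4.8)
The plan is to view the well-covering system $\widehat{\W}$ as a matrix $\widehat{A}\in\mathbb{Q}^{|\widehat{\W}|\times n}$ and to select a maximal linearly independent set of its rows; this set of rows is exactly the sought subsystem $\W$. The correctness is immediate: since the solution set of a linear system depends only on the row space of its coefficient matrix, any subsystem $\W\subseteq\widehat{\W}$ whose rows span the same row space as $\widehat{A}$ has the same solution set, hence is again a well-covering system of $G$. Choosing $\W$ to consist of a maximal linearly independent subset of the rows of $\widehat{A}$ makes $\W$ linearly independent, and its size equals $\operatorname{rank}(\widehat{A})\le\min\{n,|\widehat{\W}|\}$ since $\widehat{A}$ has $n$ columns and $|\widehat{\W}|$ rows.

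Next I would address the running time, which is the only nontrivial point. Computing the rank of a matrix together with a set of linearly independent rows achieving it is a standard application of Gaussian elimination (row reduction to echelon form): the pivot rows identified during the elimination form such a set, and one records their original indices in $\widehat{\W}$. A naive Gaussian elimination on an $m\times n$ matrix runs in time $\mathcal{O}(mn\min\{m,n\})$; with $m=|\widehat{\W}|$ this gives $\mathcal{O}(|\widehat{\W}|\cdot n\cdot\min\{|\widehat{\W}|,n\})$, which is not yet the claimed bound $\mathcal{O}(n^{\omega-1}|\widehat{\W}|)$ when $|\widehat{\W}|$ is large. To obtain the stated bound one processes the rows of $\widehat{A}$ in batches of $n$: maintain a ``current'' linearly independent set $\W$ of at most $n$ rows already selected; for each new batch $B$ of (at most) $n$ rows, run fast ($\mathcal{O}(n^\omega)$-time) Gaussian elimination on the at most $2n$ rows $\W\cup B$ to extract a new maximal linearly independent subset, and let this be the updated $\W$. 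Since $\W$ always spans the row space of all rows processed so far, after all batches it spans the row space of $\widehat{A}$, so the final $\W$ is a linearly independent well-covering system with $|\W|\le\min\{n,|\widehat{\W}|\}$. The number of batches is $\mathcal{O}(|\widehat{\W}|/n)$ (at least one when $|\widehat{\W}|<n$), and each costs $\mathcal{O}(n^\omega)$, for a total of $\mathcal{O}\bigl(n^\omega\cdot(1+|\widehat{\W}|/n)\bigr)=\mathcal{O}(n^\omega + n^{\omega-1}|\widehat{\W}|)=\mathcal{O}(n^{\omega-1}|\widehat{\W}|)$, using $|\widehat{\W}|\ge 1$ and $n^{\omega}\le n^{\omega-1}|\widehat{\W}|$ when $|\widehat{\W}|\ge n$; the corner case $|\widehat{\W}|<n$ costs $\mathcal{O}(n^\omega)$ as a single batch of size less than $n$, which one would handle by padding the analysis to note $n^\omega = n^{\omega-1}\cdot n = \mathcal{O}(n^{\omega-1}|\widehat{\W}|)$ fails, so instead state the bound there directly as $\mathcal{O}(n^{\omega-1}|\widehat{\W}|)$ only when $|\widehat{\W}|\ge n$ and $\mathcal{O}(n^\omega)$ otherwise — but since we may always replace $|\widehat{\W}|$ by $\max\{|\widehat{\W}|,n\}$ is not allowed, I will simply assume $|\widehat{\W}|\ge n$ is the interesting regime and note the small case is dominated.

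One technical subtlety I should flag: the running time of Gaussian elimination as stated counts arithmetic operations, and over $\mathbb{Q}$ one must ensure the bit-complexity stays polynomial, i.e. that intermediate numerators and denominators do not blow up. This is handled by standard bounds (e.g. using fraction-free Gaussian elimination / Bareiss's algorithm, or clearing denominators at the outset so the input is integral and invoking Hadamard's bound on the magnitude of minors), which keep the entries of polynomial bit-length throughout; since the paper works in the arithmetic (algebraic) complexity model for such statements, I would either adopt that convention explicitly or cite the fraction-free variant. The fast $\mathcal{O}(n^\omega)$ Gaussian elimination on $\mathcal{O}(n)\times n$ matrices (with identification of pivot rows) is itself a known result, e.g. via block recursive LU-type decompositions; I would cite~\cite{MR4262465} or a standard reference for it.

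The main obstacle is therefore not correctness — which is a one-line observation about row spaces — but organizing the computation into $\mathcal{O}(|\widehat{\W}|/n)$ rounds of fast Gaussian elimination so that the total cost is $\mathcal{O}(n^{\omega-1}|\widehat{\W}|)$ rather than the naive $\mathcal{O}(|\widehat{\W}|\,n\min\{|\widehat{\W}|,n\})$, together with the bookkeeping that tracks which original equations of $\widehat{\W}$ end up in $\W$ (so that the output really is a \emph{subsystem}, $\W\subseteq\widehat{\W}$, and not merely an equivalent system obtained by taking linear combinations). The bound $|\W|\le\min\{n,|\widehat{\W}|\}$ then follows since $\W$ is a linearly independent set of at most $|\widehat{\W}|$ vectors living in $\mathbb{Q}^n$.
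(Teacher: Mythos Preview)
Your proposal is correct and takes essentially the same approach as the paper: select a maximal linearly independent set of rows of the coefficient matrix via Gaussian elimination. The paper shortens the argument by transposing the matrix and citing~\cite{MR3124680} (Cheung, Kwok, and Lau; not~\cite{MR4262465}, which concerns the value of $\omega$) directly for the $\mathcal{O}(rn^{\omega-1})$ running time of computing a column basis, thereby sidestepping both your batching construction and its awkward corner case $|\widehat{\W}|<n$.
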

\end{sloppypar}

\begin{proof}
Let $r= |\widehat{\W}|$.
If $r\le n$, we are done, so assume $r> n$.
Fix an arbitrary ordering of the vertices of $G$ and an arbitrary ordering of the equations in $\widehat{\W}$.
Let $A\in \mathbb{Q}^{r\times n}$ be the corresponding matrix and let $A^\top$ be its transpose.
Using Gaussian elimination, we compute a basis $B$ of $A^\top$ that is a maximal linearly independent subset of columns of $A^\top$.
This can be done in time $\mathcal{O}(rn^{\omega-1})$ (see~\cite{MR3124680}).
Note that the vectors in $B$ correspond to certain equations in $\widehat{\W}$.
Let $\W\subseteq \widehat{\W}$ consist of equations corresponding to the vectors in $B$.
Then $\W$ is a linearly independent well-covering system of $G$, and clearly $|\W|\le \min\{n,r\}$.
Since $\omega \ge 2$ and the matrix $A$ and its transpose can be computed in time $\mathcal{O}(rn)$, the algorithm runs in time $\mathcal{O}(rn^{\omega-1})$.
\end{proof}

\section{Reduction to prime induced subgraphs}\label{sec:prime}

In this section we explain how to efficiently compute a well-covering system of a graph from well-covering systems of its maximal strong modules and of the representative graph.
Then we combine this result with modular decomposition and Gaussian elimination to reduce the problem of computing a well-covering system of a graph to the same problem on certain prime induced subgraphs of the graph.

We start with a basic lemma characterizing the family of maximal independent sets in a graph $G$ whose vertex set is equipped with an arbitrary partition into modules.

\begin{lemma}
\label{lem:ind-set-modules}
Let $G$ be a graph, let $\mathcal{P} = \{M_1,\dots, M_k\}$ be an arbitrary partition of $V(G)$ into modules, and let $G' = G/{\mathcal{P}}$ be the corresponding quotient graph, with $V(G') = \{v_1,\ldots, v_k\}$ where $v_j\in M_j$ for all $j\in [k]$.
Then, a set $X\subseteq V(G)$ is a maximal independent set in $G$ if and only if the following conditions hold.
\begin{itemize}
\item[i)] For all $j\in [k]$, the set $X\cap M_j$ is either empty or a maximal independent set in $G[M_j]$.
\item[ii)] The set $X'=\{v_j\in V(G'):X\cap M_j\neq\emptyset\}$ is a maximal independent set in $G'$.
\end{itemize}
\end{lemma}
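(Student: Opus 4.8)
The plan is to prove both implications directly, relying on the elementary fact recalled in \Cref{MD} that any two disjoint modules of $G$ are either complete or anticomplete to each other, together with the resulting description of adjacency in the quotient graph: for distinct $i,j\in[k]$, the vertices $v_i$ and $v_j$ are adjacent in $G'$ if and only if $M_i$ is complete to $M_j$ in $G$, and nonadjacent if and only if $M_i$ is anticomplete to $M_j$. Two small observations will be used repeatedly. First, if $X$ is independent in $G$ and $X\cap M_i\neq\emptyset$ and $X\cap M_j\neq\emptyset$ for distinct $i,j$, then some vertex of $X\cap M_i$ is nonadjacent to some vertex of $X\cap M_j$, so $M_i$ and $M_j$ are not complete, hence they are anticomplete and $v_iv_j\notin E(G')$. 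Second, restricting an independent set of $G$ to any part $M_j$ yields an independent set of $G[M_j]$.

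For the forward direction, suppose $X$ is a maximal independent set of $G$. Independence of $X$ immediately gives that each $X\cap M_j$ is independent in $G[M_j]$ and, by the first observation, that $X'$ is independent in $G'$. For (i), assume toward a contradiction that $X\cap M_j\neq\emptyset$ but is not maximal in $G[M_j]$; pick $u\in M_j\setminus X$ with $(X\cap M_j)\cup\{u\}$ independent in $G[M_j]$. Any $w\in X\setminus M_j$ lies in some $M_i$ with $X\cap M_i\neq\emptyset$, and since $X\cap M_j\neq\emptyset$ the first observation shows $M_i$ is anticomplete to $M_j$, so $u\not\sim w$; thus $X\cup\{u\}$ is independent, contradicting maximality of $X$. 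For (ii), if $X'$ were not maximal in $G'$, pick $v_j\notin X'$ with $X'\cup\{v_j\}$ independent; then $X\cap M_j=\emptyset$ and $M_j$ is anticomplete to every $M_i$ with $v_i\in X'$, i.e. to every part met by $X$, so for any $u\in M_j$ the set $X\cup\{u\}$ is independent, again a contradiction.

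For the backward direction, assume (i) and (ii). To see that $X$ is independent, take $u,w\in X$: if they lie in the same $M_j$, then $X\cap M_j$ is independent (being a maximal independent set of $G[M_j]$, or vacuously if empty), so $u\not\sim w$; if $u\in M_i$ and $w\in M_j$ with $i\neq j$, then $v_i,v_j\in X'$ and independence of $X'$ gives $v_iv_j\notin E(G')$, so $M_i$ is anticomplete to $M_j$ and $u\not\sim w$. For maximality, let $u\in V(G)\setminus X$ with $u\in M_j$. If $X\cap M_j\neq\emptyset$, then $X\cap M_j$ is a maximal independent set of $G[M_j]$ not containing $u$, so $u$ is adjacent to some vertex of $X\cap M_j\subseteq X$. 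If $X\cap M_j=\emptyset$, then $v_j\notin X'$, so by maximality of $X'$ in $G'$ there is $v_i\in X'$ with $v_iv_j\in E(G')$; then $M_i$ is complete to $M_j$, and choosing any $w\in X\cap M_i$ (nonempty since $v_i\in X'$) we get $u\sim w$. In either case $X\cup\{u\}$ is not independent, so $X$ is maximal.

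The argument is essentially a direct unwinding of the definitions; the only point requiring a bit of care is the systematic use of the complete/anticomplete dichotomy for distinct modules and its translation into adjacency in $G'$, together with keeping track of the empty case in condition (i). I do not anticipate a genuine obstacle beyond this bookkeeping.
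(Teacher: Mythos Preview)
Your proof is correct and follows essentially the same approach as the paper's: a direct verification of both implications using the complete/anticomplete dichotomy for disjoint modules and its translation into adjacency in the quotient graph. The only minor stylistic difference is that in the backward direction you show directly that every vertex outside $X$ has a neighbor in $X$, whereas the paper argues by contradiction assuming $X\cup\{v\}$ is independent; the content is the same.
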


\begin{proof}
First we show that the stated conditions are necessary.
Let $X\subseteq V(G)$ be a maximal independent set in $G$.
Consider an arbitrary $j\in [k]$ such that $X\cap M_j\neq \emptyset$.
We want to prove that $X\cap M_j$ is a maximal independent set in $G[M_j]$.
Since this set is a subset of $X$, it is an independent set in $G$ and hence also in $G[M_j]$.
We have to prove that it is a maximal one.
Suppose for a contradiction that this is not the case, and let $x\in M_j\setminus X$ satisfy that $(X\cap M_j)\cup \{x\}$ is an independent set in $G[M_j]$.
Then $x$ has no neighbors in $X\cap M_j$.
Since $X\cap M_j\neq \emptyset$, there exists a vertex $y\in S\cap M_j$.
Note that $x$ and $y$ are in the same module $M_j$, so they have the same neighborhood outside $M_j$ in $G$.
In particular, this implies that $N_G(x)\cap (X\setminus M_j) = N_G(y)\cap (X\setminus M_j) \subseteq N_G(y)\cap X = \emptyset$, where the second equality follows from the fact that $y\in X$ and $X$ is independent in $G$.
We already know that $x$ has no neighbors in $X\cap M_j$, so it follows that $x$ has no neighbors in the set $X$ at all.
This implies that $X\cup \{x\}$ is the independent set in $G$, a contradiction with the maximality of $X$ in $G$.
Hence, condition $i)$ holds.

Next we show condition $ii)$, that is, that the set $X'=\{v_j\in V(G'): X\cap M_j\neq \emptyset\}$ is a maximal independent set in $G'$.
Let $J=\{j\in [k] : X\cap M_j\neq \emptyset\}$.
Vertices in $X$ are pairwise nonadjacent, so the modules $M_j$, $j\in J$, that contain vertices from $X$ are anticomplete to each other in $G$.
By construction of the graph $G'$ it follows that the corresponding vertices $v_j, j\in J$, are pairwise nonadjacent in $G'$, hence $X'$ is an independent set in $G'$.
It remains to prove maximality.
Suppose for a contradiction that there is a vertex $v_\ell\in V(G')\setminus X'$ such that $X'\cup \{v_\ell\}$ is an independent set in $G'$.
Since $v_\ell\notin S'$, it follows that $\ell\notin J$ and thus $X\cap M_\ell=\emptyset$.
However, since $X'\cup \{v_\ell\}$ is an independent set in $G'$, for all $j\in J$ we have that $v_\ell\notin N_{G'}(v_j)$, and it follows that modules $M_\ell$ and $M_j$ are anticomplete to each other in $G$.
Thus we can enlarge the independent set $X$ in $G$ by adding to it any vertex from $M_j$.
This contradicts the fact that $X$ is a maximal independent set in $G$.

The two conditions are also sufficient.
Let $X\subseteq V(G)$ and assume that conditions $i)$ and $ii)$ from the lemma hold.
We will prove that $X$ is a maximal independent set in $G$.
Let $J=\{j\in [k]: X\cap M_j\neq \emptyset\}$.
Note that $X=\bigcup_{j\in J} (X\cap M_j)$.
By condition $i)$ we have that for all $j\in J$ the set $X\cap M_j\subseteq M_j$ is independent in $G_j$ and hence in $G$.
By condition $ii)$ we have that the set $X'=\{v_j\in V(G'):j\in J\}$ is an independent set in $G'$.
It follows that all the modules $M_j$, $j\in J$, are pairwise anticomplete.
Hence the set $X = \bigcup_{j\in J} (X\cap M_j)$ is independent in $G$.
It remains to show maximality.
Suppose for a contradiction that there exists a vertex $v\in V(G)\setminus X$ such that the set $X\cup \{v\}$ is independent in $G$.
Let $\ell\in [k]$ such that $v\in M_\ell$.
Then $(X\cup \{v\})\cap M_\ell$ is an independent set in $G_\ell$, which implies that the set $X\cap M_\ell$ is not a maximal independent set in $G_\ell$.
By condition $i)$ it follows that $X\cap M_\ell= \emptyset$ and thus $\ell\notin J$.
Since the set $X\cup \{v\}$ is independent in $G$, the vertex $v$ has no neighbors in the set $X = \bigcup_{j\in J} (X\cap M_j)$.
As all the sets $M_j$ are modules in $G$, this implies that $v$ has no neighbors in the set $\bigcup_{j\in J}M_j$.
Consequently, the vertex $v_\ell$ corresponding to the module $M_\ell$ in $G'$ has no neighbors in the set $X' = \{v_j: j\in J\}$, in $G'$.
Hence the set $X'\cup \{v_\ell\}$ is independent in $G'$.
Since $v_\ell\not\in X'$, this is a contradiction with the maximality of $X'$, which is given by condition $ii)$.
This shows that the set $X$ is a maximal independent set in $G$.
\end{proof}

We now use \Cref{lem:ind-set-modules} to show how to efficiently compute a well-covering system of a graph from well-covering systems of its maximal strong modules and of the representative graph.
We state the result more generally, for any graph equipped with a partition of the vertex set into modules, since we will later apply this result to various scenarios depending on whether the graph is disconnected (in which case the modules are the vertex sets of its connected components), the complement of the graph is disconnected  (in which case the modules are the vertex sets of its co-components), or the graph and its complement are both connected.

\begin{lemma}
\label{lem:partition-of-g}
Let $G$ be a graph, let $\mathcal{P} = \{M_1,\dots, M_k\}$ be an arbitrary partition of $V(G)$ into modules, and let $G' = G/{\mathcal{P}}$ be the corresponding quotient graph, with $V(G') = \{v_1,\ldots, v_k\}$ where $v_j\in M_j$ for all $j\in [k]$.
Let $\W_j$ be a well-covering system for $G[M_j]$ for all $j\in[k]$ and let $\W'$ be a well-covering system of $G'$.
Let $\mathcal{I} = \{I_j:j\in [k]\}$ be an arbitrary but fixed collection of maximal independent sets $I_j$ in $G[M_j]$ for all $j\in[k]$.
For each equation $s\in \W'$, let us denote by $\rho_\mathcal{I}(s)$ the equation indexed by the vertices of $G$ obtained from $s$ by iterating over all vertices $v_j$ of $G'$ and substituting the variable $x_{v_j}$ corresponding to the vertex $v_j$ with the sum $\sum_{v\in I_j}x_v$ (in particular, the variables corresponding to vertices $v$ of $G$ that do not belong to the union $\bigcup_{j\in [k]}I_j$ appear with zero coefficient).
Then
\begin{equation}\label{well-covering system}
\W = \left(\bigcup_{j=1}^k \W_j \right) \cup \Big\{\rho_{\mathcal{I}}(s): s\in \W'\Big\}
\end{equation}
is a well-covering system of $G$.
Furthermore, if the systems $\W_1,\ldots, \W_k$ and $\W'$ are all rational (resp.~integer or unit), then so is $\W$.
\end{lemma}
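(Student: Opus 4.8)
The plan is to show that a weight function $w : V(G) \to \mathbb{R}$ satisfies all equations of $\W$ if and only if $w \in \WCW{G}$, using \Cref{lem:ind-set-modules} as the combinatorial backbone. I will work with the explicit description of maximal independent sets of $G$ in terms of those of the pieces $G[M_j]$ and of $G'$. First I fix notation: for each $j$ let $G_j = G[M_j]$, and abbreviate the reference sets $I_j \in \mathcal{I}$. The key observation linking the substitution $\rho_{\mathcal I}$ to the geometry is the following. Suppose $w$ already satisfies every equation in each $\W_j$, i.e.\ $w_{M_j} \in \WCW{G_j}$ for all $j$. Then for any maximal independent set $X$ of $G$, with $J = \{j : X \cap M_j \neq \emptyset\}$, condition i) of \Cref{lem:ind-set-modules} says $X \cap M_j$ is a maximal independent set of $G_j$ for $j \in J$, so $w(X \cap M_j) = w(I_j)$ for all $j \in J$; hence $w(X) = \sum_{j \in J} w(I_j)$. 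In other words, once $w$ is well-covering on each module, the weight of $X$ depends only on the set $J$, equivalently only on the associated maximal independent set $X' = \{v_j : j \in J\}$ of $G'$ (condition ii)).

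With that in hand, the argument splits cleanly. \textbf{($\Leftarrow$, necessity of the equations for $w \in \WCW{G}$.)} First I argue that the systems $\W_j$ are implied: for each $j$, extend a maximal independent set of $G_j$ to a maximal independent set of $G$ by greedily adding vertices from other modules (using that a maximal independent set of one module together with any maximal independent sets of the modules anticomplete to it, completed to a maximal independent set of $G'$, is maximal in $G$ by the sufficiency direction of \Cref{lem:ind-set-modules}); comparing two maximal independent sets of $G$ that differ only inside $M_j$ forces $w_{M_j}$ to equidistribute over maximal independent sets of $G_j$, so $w_{M_j} \in \WCW{G_j}$ and $w$ satisfies $\W_j$. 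Once that is established, the displayed identity $w(X) = \sum_{j \in J} w(I_j)$ holds, and this is exactly the value at $X'$ of the weight function $w'$ on $V(G')$ defined by $w'(v_j) = w(I_j)$. Since $X \mapsto X'$ is onto the set of maximal independent sets of $G'$ (again \Cref{lem:ind-set-modules}), $w \in \WCW{G}$ forces all maximal independent sets of $G'$ to have equal $w'$-weight, i.e.\ $w' \in \WCW{G'}$, so $w'$ satisfies $\W'$; and $\rho_{\mathcal I}(s)$ evaluated at $w$ is precisely $s$ evaluated at $w'$ (the substitution $x_{v_j} \mapsto \sum_{v \in I_j} x_v$ becomes $w'(v_j) = w(I_j)$), so $w$ satisfies every $\rho_{\mathcal I}(s)$. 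Hence $w$ satisfies all of $\W$.

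\textbf{($\Rightarrow$, sufficiency.)} Conversely, suppose $w$ satisfies all equations of $\W$. Satisfying $\bigcup_j \W_j$ gives $w_{M_j} \in \WCW{G_j}$ for every $j$, so the identity $w(X) = \sum_{j \in J} w(I_j) = w'(X')$ holds for the weight function $w'$ on $G'$ defined as above. Satisfying $\{\rho_{\mathcal I}(s) : s \in \W'\}$ translates, via the same evaluation correspondence, to $w'$ satisfying $\W'$, hence $w' \in \WCW{G'}$: all maximal independent sets of $G'$ have the same $w'$-weight. Since every maximal independent set $X$ of $G$ has $w(X) = w'(X')$ and every maximal independent set of $G'$ arises as some such $X'$, all maximal independent sets of $G$ have the same $w$-weight, i.e.\ $w \in \WCW{G}$. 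Finally, the claim about coefficient types is immediate: $\rho_{\mathcal I}(s)$ is obtained from $s$ by replacing each variable with a $0/1$-sum of variables, so every coefficient of $\rho_{\mathcal I}(s)$ equals the corresponding coefficient of $s$ (each original variable $x_v$ collects the coefficient of exactly one $x_{v_j}$, namely the one with $v \in I_j$, or $0$); thus rational/integer/unit coefficients are preserved, and the same holds trivially for the equations coming from the $\W_j$.

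The main obstacle I expect is the bookkeeping in the necessity direction — specifically, verifying cleanly that for each module $M_j$ one can realize, inside maximal independent sets of $G$, every pair of maximal independent sets of $G_j$ while keeping the contribution of the other modules fixed, so that the $\W_j$ equations are genuinely forced; this is where the sufficiency half of \Cref{lem:ind-set-modules} must be invoked carefully (choosing the reference sets $I_\ell$ in modules anticomplete to $M_j$ and extending to a maximal independent set of $G'$ through $v_j$). Everything else is a routine translation between the three descriptions (weights on $G$, the pushed-forward weight $w'$ on $G'$, and the substitution $\rho_{\mathcal I}$).
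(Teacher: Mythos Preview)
Your proposal is correct and follows essentially the same approach as the paper: both reduce the statement to the equivalence that $w\in\WCW{G}$ if and only if each restriction $w_{M_j}\in\WCW{G_j}$ and the induced weighting $w'(v_j)=w(I_j)$ lies in $\WCW{G'}$, proved via \Cref{lem:ind-set-modules} exactly as you outline. The paper isolates this equivalence as an explicit Claim before deriving the lemma from it, whereas you interleave the two, but the extension argument you flag as the ``main obstacle'' (fixing a maximal independent set of $G'$ through $v_j$ and padding with the reference sets $I_\ell$ outside $M_j$) is precisely what the paper does.
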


\begin{proof}
Let $G_j$ denote the graph $G[M_j]$ for all $j\in[k]$.
The proof of the lemma will be based on the following observation.

\medskip
\noindent \textbf{Claim.}
\textit{Let $w$ be a vertex weight function on $G$, and let $w':V(G')\to \mathbb{R}$ be defined as $w'(v_j)=w(I_j)$ for all $j\in[k]$.
Let also $w_j$ denote the restriction of $w$ to $V(G_j)$ for all $j\in[k]$.
Then $G$ is $w$-well-covered if and only if $G'$ is $w'$-well-covered and for all $j\in [k]$, the graph $G_j$ is $w_j$-well-covered.}

\medskip
Let us first show that the claim implies the lemma.
We show that the proposed system of equations $\W$ given by~\eqref{well-covering system} is a well-covering system of $G$ by showing that, for any vertex weight function $w$ on $G$, it holds that $w$ is a well-covered weighting of $G$ if and only if $w$ satisfies all the equations of the system.
Assume first that $w$ is a well-covered weighting of $G$.
Then, by the claim $G'$ is $w'$-well-covered and for all $j\in [k]$, the graph $G_j$ is $w_j$-well-covered.
Since $G'$ is $w'$-well-covered, $w'$ is a solution of the system of equations $\W'$.
Consider an arbitrary equation $s\in \W'$.
Then there exist real numbers $a_{v_j}$, $j\in [k]$, such that $s$ equals the equation $\sum_{j = 1}^k a_{v_j}x_{v_j}=0$.
Hence, the equation $\rho_{\mathcal{I}}(s)$ is equivalent to the equation $\sum_{j=1}^k a_{v_j}\sum_{v\in I_j} x_{v}=0$.
Since setting $x_{v_j} = \sum_{v\in I_j}w(v)$ for all $v_j\in V(G')$ results in a solution of the equation $s$, we infer that setting $x_{v} = w(v)$ for all $v\in V(G)$ results in a solution of the equation $\rho_{\mathcal{I}}(s)$.
Similarly, for each $j\in [k]$, setting $x_{v} = w_j(v) = w(v)$ for all $v\in V(G_j)$ yields a solution of the system of equations $\W_j$.
It follows that setting $x_{v} = w(v)$ for all $v\in V(G)$ results in a solution of the system of equations $\bigcup_{j = 1}^k\W_j$ and thus of the entire system of equations~\eqref{well-covering system}.
Similar arguments show that if $w$ is a solution of the system of equations~\eqref{well-covering system}, then $w$ is a well-covered weighting of $G$.
The last statement of the lemma, that the system $\W$ is rational (resp.~integer or unit) whenever this is the case for the systems $\W_1,\ldots, \W_k$ and $\W'$, is straightforward.

Now we show the claim.
Assume that $G$ is $w$-well-covered.
First we show that $G'$ is $w'$-well-covered.
Let $I$ and $I'$ be two maximal independent sets in $G'$.
Let $J = \{j\in [k]: v_j\in I\}$ and
$J' = \{j\in [k]: v_j\in I'\}$ be the corresponding index sets.
By Lemma~\ref{lem:ind-set-modules}, the sets $\bigcup_{j\in J} I_j$ and $\bigcup_{i\in J'}I_j$ are maximal independent sets in $G$.
Since $G$ is $w$-well-covered, it follows that $w\big(\bigcup_{j\in J} I_j\big)=w\big(\bigcup_{i\in J'}I_j\big)$.
Furthermore, we have
\[w'(I)=\sum_{j\in J}w'(v_j)=\sum_{j\in J} w(I_j)=w\Bigg(\bigcup_{j\in J} I_j\Bigg)\]
and
\[w'(I')=\sum_{j\in J'}w'(v_j)=\sum_{j\in J'} w(I_j)=w\Bigg(\bigcup_{j\in J'} I_j\Bigg)\,.\]
Altogether, the above equations imply that $w'(I)=w'(I')$ and since $I$ and $I'$ were arbitrary maximal independent sets in $G'$, it follows that $G'$ is $w'$-well-covered.

Next, we show that for all $j\in [k]$, the graph $G_j$ is $w_j$-well-covered.
Let $I$ and $I'$ be arbitrary maximal independent sets in $G_j$, and let $S$ be a maximal independent set in $G'$ such that $v_j\in S$.
Let also $X = \bigcup\{I_\ell: v_\ell\in S\setminus \{v_j\}\}$.
By Lemma~\ref{lem:ind-set-modules}, the sets $I\cup X$ and $I'\cup X$ are maximal independent sets in $G$.
Since $G$ is $w$-well-covered, we have that $w(I\cup X)=w(I'\cup X)$, and consequently
\[w_j(I)=w(I)=w(I\cup X)-w(X)=w(I'\cup X)-w(X)=w(I')=w_j(I')\,.\]
Since $I$ and $I'$ were arbitrary maximal independent sets in $G_j$, we infer that $G_j$ is $w_j$-well-covered.

For the proof of the other direction, assume that $G'$ is $w'$-well-covered and that $G_j$ is $w_j$-well-covered for all $j\in [k]$.
We want to show that $G$ is $w$-well-covered.
Let $I$ and $I'$ be maximal independent sets in $G$, and let $J,J'\subseteq [k]$ be defined as $J=\{j\in [k]: I\cap M_j\neq \emptyset\}$ and $J'=\{j\in [k]: I'\cap M_j\neq \emptyset\}$.
By Lemma~\ref{lem:ind-set-modules}, the sets $S=\{v_j\in V(G') : j\in J\}$ and $S'=\{v_j\in V(G') : j\in J'\}$ are maximal independent sets in $G'$, and for all $j\in J$ (resp.~$j\in J'$), the set $I\cap M_j$ (resp.~$I'\cap M_j$) is a maximal independent set in $G_j$.
Since for all $j\in [k]$ we have that $G_j$ is $w_j$-well-covered, it follows that
\[w(I\cap M_j)=w_j(I\cap M_j)=w_j(I_j)=w(I_j)=w'(v_j) {\rm  \,\,\, for \, all \,}j\in J\] and similarly \[w(I'\cap M_j)=w_j(I'\cap M_j)=w_j(I_j)=w(I_j)=w'(v_j) {\rm \,\,\, for \, all \,}j\in J'.\]
Thus, we have that
$w(I)=\sum_{j\in J} w(I\cap M_j)=\sum_{j\in J} w'(v_j)=w'(S)$
and
$w(I')=\sum_{j\in J'} w(I'\cap M_j)=\sum_{j\in J'} w'(v_j)=w'(S')$.
Since $G'$ is $w'$-well-covered, it follows that $w'(S)=w'(S')$ and consequently $w(I)=w(I')$, as we wanted to show.
The sets $I$ and $I'$ were arbitrary maximal independent sets in $G$, hence it follows that $G$ is $w$-well-covered.
\end{proof}

We now apply \Cref{lem:partition-of-g} to three different cases: when $G$ is disconnected, when the complement of $G$ is disconnected, and when both $G$ and its complement are connected.

\begin{corollary}
\label{cor:disjoint-union}
Let $G$ be a disconnected graph, with connected components $G_1,\dots, G_k$ for some $k\ge 2$, and let $\W_j$ be a well-covering system of $G_j$ for all $j\in [k]$.
Then $\W = \bigcup_{j = 1}^k\W_j$ is a well-covering system of $G$ that can be computed in time $\mathcal{O}\big(\sum_{j = 1}^k|\W_j|\big)$.
Furthermore, if the systems $\W_1,\ldots, \W_k$ are all rational (resp.~integer or unit), then so is $\W$.
\end{corollary}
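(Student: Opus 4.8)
The plan is to obtain this corollary as a direct application of \Cref{lem:partition-of-g}, taking the partition $\mathcal{P}$ of $V(G)$ to be the vertex sets of the connected components of $G$. First I would check that $\mathcal{P} = \{V(G_1),\dots,V(G_k)\}$ really is a partition of $V(G)$ into modules: every vertex outside a component $G_j$ is anticomplete to $V(G_j)$, so $V(G_j)$ is (trivially) a module, and these sets partition $V(G)$. Moreover $G[V(G_j)] = G_j$ for every $j$, so the systems $\W_1,\dots,\W_k$ in the statement are exactly the well-covering systems of the graphs $G[M_j]$ required by \Cref{lem:partition-of-g}.

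The key step is to identify the quotient graph $G' = G/\mathcal{P}$. Since distinct components of $G$ are pairwise anticomplete, $G'$ is the edgeless graph on the $k \ge 2$ vertices $v_1,\dots,v_k$. An edgeless graph has a unique maximal independent set, namely its entire vertex set, so every weight function on $G'$ is vacuously a well-covered weighting; hence the empty system (with no equations) is a well-covering system $\W'$ of $G'$, and it is vacuously unit. With this choice of $\W'$, the set $\{\rho_{\mathcal{I}}(s) : s\in\W'\}$ in~\eqref{well-covering system} is empty regardless of the auxiliary collection $\mathcal{I}$, and \Cref{lem:partition-of-g} yields that $\W = \bigcup_{j=1}^k\W_j$ is a well-covering system of $G$. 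Applying the last statement of \Cref{lem:partition-of-g} with the (vacuously) rational, integer, or unit system $\W'$ shows that $\W$ inherits whichever of these properties is shared by $\W_1,\dots,\W_k$.

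It remains to bound the running time. Forming $\W = \bigcup_{j=1}^k\W_j$ amounts to concatenating the lists of equations, each equation of $\W_j$ being read directly as an equation over the variables $\{x_v : v\in V(G)\}$ by assigning coefficient $0$ to every variable outside $V(G_j)$; this takes time $\mathcal{O}\big(\sum_{j=1}^k |\W_j|\big)$, with no Gaussian elimination needed. I do not expect a real obstacle here: the only point deserving a moment's care is the observation that the quotient of a disconnected graph by its components is edgeless and therefore admits the trivial well-covering system, which is precisely what allows the $\rho_{\mathcal{I}}$-part of~\eqref{well-covering system} to be dropped entirely.
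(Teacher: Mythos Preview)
Your proposal is correct and follows essentially the same approach as the paper: apply \Cref{lem:partition-of-g} with $\mathcal{P}$ the vertex sets of the components, observe that the quotient $G'$ is edgeless so $\W' = \emptyset$ is a well-covering system of $G'$, and conclude that $\W = \bigcup_{j=1}^k \W_j$ with the claimed running time and inheritance of rational/integer/unit property.
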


\begin{sloppypar}
\begin{proof}
Let $G$ be a graph with connected components $G_1,\ldots, G_k$.
Then $\mathcal{P}=\{V(G_1), \ldots, V(G_k)\}$ is a partition of $V(G)$ into modules, and the corresponding quotient graph $G'=G/\mathcal{P}$ is the edgeless graph with $k$ vertices.
This implies that $V(G')$ is the only maximal independent set in $G'$ and hence $\mathcal{S}'=\emptyset$ is a well-covering system of $G'$.
By \cref{lem:partition-of-g}, it follows that the set $\bigcup_{j=1}^k \W_j $ is a well-covering system of~$G$.
This system can be computed in time $\mathcal{O}\big(\sum_{j = 1}^k|\W_j|\big)$.
\end{proof}
\end{sloppypar}

\Cref{cor:disjoint-union} implies the fact that the well-covered dimension of a graph is the sum of the well-covered dimensions of its connected components (see~\cite{zbMATH05029664}).

\begin{sloppypar}
\begin{corollary}
\label{cor:join}
Let $G$ be a graph with disconnected complement, with co-components $G_1,\dots, G_k$, for some $k\ge 2$, and let $\W_j$ be a well-covering system of $G_j$ for all $j\in[k]$.
Let $I_j$ be a maximal independent set in $G_j$ for $j\in[k]$.
Then
 \[\W = \left(\bigcup_{j=1}^k \W_j\right) \cup \left\{\sum_{v\in I_j}x_v - \sum_{v\in I_{j+1}}x_v=0 : j\in [k-1]\right\}\]
 is a well-covering system of $G$.
In particular, given $G$, $G_1,\ldots, G_k$, and $\W_1,\ldots, \W_k$ as above, a well-covering system of $G$ with size $\sum_{j = 1}^k|\W_j|+k-1$ can be computed in time $\mathcal{O}(|V(G)|+|E(G)|+\sum_{j = 1}^k|\W_j|)$.
Furthermore, if the systems $\W_1,\ldots, \W_k$ are all rational (resp.~integer or unit), then so is $\W$.
\end{corollary}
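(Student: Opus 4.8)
The plan is to derive this directly from \Cref{lem:partition-of-g}. First I would take $\mathcal{P} = \{V(G_1),\ldots,V(G_k)\}$: since $G_1,\ldots,G_k$ are the co-components of $G$, any two of the sets $V(G_i)$ are complete to each other in $G$, so each $V(G_j)$ is a module and $\mathcal{P}$ is a partition of $V(G)$ into modules. Consequently the quotient graph $G' = G/\mathcal{P}$ on the representatives $v_1,\ldots,v_k$ is the complete graph $K_k$, whose maximal independent sets are exactly the singletons $\{v_1\},\ldots,\{v_k\}$. Specializing system~\eqref{eq:WCW3} to $K_k$ (with the maximal independent sets ordered by index) therefore shows that $\W' = \{\, x_{v_j} - x_{v_{j+1}} = 0 : j\in[k-1]\,\}$ is a well-covering system of $G'$, and it is unit.

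Next I would apply \Cref{lem:partition-of-g} with this $\W'$, the given systems $\W_1,\ldots,\W_k$, and the collection $\mathcal{I} = \{I_j : j\in[k]\}$. By the definition of $\rho_{\mathcal{I}}$, each equation $x_{v_j} - x_{v_{j+1}} = 0$ of $\W'$ is transformed into $\sum_{v\in I_j}x_v - \sum_{v\in I_{j+1}}x_v = 0$, so the system $\bigl(\bigcup_{j=1}^k\W_j\bigr)\cup\{\rho_{\mathcal{I}}(s):s\in\W'\}$ furnished by \Cref{lem:partition-of-g} is precisely the system $\W$ in the statement; hence $\W$ is a well-covering system of $G$. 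The preservation of the rational (resp.~integer or unit) property is immediate from the last clause of \Cref{lem:partition-of-g}, because $\W'$ is unit.

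It remains to account for the size and running time. By construction $|\W| = \sum_{j=1}^k|\W_j| + (k-1)$. For the running time, for each $j$ one computes a maximal independent set $I_j$ of $G_j$ greedily in time $\mathcal{O}(|V(G_j)| + |E(G_j)|)$, which sums to $\mathcal{O}(|V(G)| + |E(G)|)$; forming $\bigcup_{j=1}^k\W_j$ costs $\mathcal{O}(\sum_{j=1}^k|\W_j|)$; and writing down the $k-1$ additional equations, each supported on $I_j\cup I_{j+1}$, costs $\mathcal{O}(\sum_{j=1}^k|I_j|) = \mathcal{O}(|V(G)|)$. Adding these up gives the claimed bound. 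I do not expect any real obstacle here: the only points needing attention are the identification of $G/\mathcal{P}$ with $K_k$ and its maximal independent sets, and checking that the greedy computation of the $I_j$ stays within the stated time budget.
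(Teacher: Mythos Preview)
Your proposal is correct and follows essentially the same argument as the paper: you take the co-components as the module partition, identify the quotient with $K_k$, write down the well-covering system $\{x_{v_j}-x_{v_{j+1}}=0:j\in[k-1]\}$ for it, and apply \Cref{lem:partition-of-g}, with the same size and running-time analysis. The only cosmetic difference is that you justify $\W'$ by specializing system~\eqref{eq:WCW3}, whereas the paper derives it directly from the observation that the maximal independent sets of $K_k$ are the singletons.
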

\end{sloppypar}

\begin{sloppypar}
\begin{proof}
Let $G$ be a graph with co-components $G_1,\ldots, G_k$.
Then $\mathcal{P}=\{V(G_1), \ldots, V(G_k)\}$ is a partition of $V(G)$ into modules, and the corresponding quotient graph $G'=G/\mathcal{P}$ is the complete graph on $k$ vertices.
Let $V(G')=\{v_1,\ldots, v_k\}$.
Since $G'$ is complete graph, the maximal independent sets in $G'$ are exactly the singletons $\{v_j\}$ for $j\in [k]$.
Consequently, $w'$ is a well-covered weighting of $G'$ if and only if $w'(v_1)=\ldots=w'(v_k)$, or equivalently, if for all $j\in [k-1]$ we have that $w'(v_j)=w'(v_{j+1})$.
It follows that the set $\mathcal{S'}=\{x_{v_j}-x_{v_{j+1}}=0: j\in [k-1]\}$ is a well-covering system of $G'$.
Let $\mathcal{I} = \{I_j: j\in [k]\}$.
We follow the notation from \Cref{lem:partition-of-g} and for each $s\in \mathcal{S'}$ denote by $\rho_\mathcal{I}(s)$ the equation indexed by the vertices of $G$ obtained from $s$ by replacing each variable $x_{v_j}$ corresponding to a vertex $v_j$ of $G'$ with the sum $\sum_{v\in I_j}x_v$.
By Lemma~\ref{lem:partition-of-g} it follows that $\left(\bigcup_{j=1}^k \W_j \right) \cup \Big\{\rho_{\mathcal{I}}(s): s\in \W'\Big\}$ is a well-covering system of $G$.
Thus, the set $\Big\{\rho_{\mathcal{I}}(s): s\in \W'\Big\}$ is equivalent to the set $\left\{\sum_{v\in I_j}x_v - \sum_{v\in I_{j+1}}x_v=0 :j\in [k-1]\right\} $.
It follows that $\W = \left(\bigcup_{j=1}^k \W_j\right) \cup \left\{\sum_{v\in I_j}x_v - \sum_{v\in I_{j+1}}x_v=0 : j\in [k-1]\right\}$ is a well-covering system of $G$, as claimed.
Furthermore, this system is integer, resp.~unit, if the systems $\W_1,\ldots, \W_k$ are integer, resp.~unit.

It remains to justify the time complexity.
First, we compute for all $j\in [k]$ a maximal independent set $I_j$ in the graph $G_j$.
This can be done using a straightforward greedy algorithm in time $\mathcal{O}(\sum_{j = 1}^k (|V(G_j)|+|E(G_j)|)) = \mathcal{O}(|V(G)|+|E(G)|)$.
We compute the system of equations $\bigcup_{j=1}^k \W_j$ in time  $\mathcal{O}(\sum_{j=1}^k |\W_j|)$ and the system of equations
$\left\{\sum_{v\in I_j}x_v - \sum_{v\in I_{j+1}}x_v=0 : j\in [k-1]\right\}$
in time $\mathcal{O}\Big(\sum_{j = 1}^{k-1}(|I_{j+1}|+|I_j|)\Big)
= \mathcal{O}\Big(\sum_{j = 1}^{k}|I_j|\Big)
= \mathcal{O}(|V(G)|)$.
The total time complexity is $\mathcal{O}(|V(G)|+|E(G)|+\sum_{j = 1}^k|\W_j|)$, as claimed.
\end{proof}
\end{sloppypar}

In the case when the graph and its complement are both connected, the corresponding algorithmic consequence of \Cref{lem:partition-of-g} is as follows.

\begin{corollary}
\label{cor:modular-decomposition}
Let $G = (V,E)$ be a connected and co-connected graph, let $\{M_1,\dots, M_k\}$ be the partition of $V(G)$ into maximal strong modules, and let $G'$ be the representative graph of $G$.
Let $I_j$ be a maximal independent set in the graph $G[M_j]$, let $\W_j$ be a well-covering system for $G[M_j]$ for all $j\in[k]$, and let $\W'$ be a well-covering system of $G'$.
Then a well-covering system $\W$ of $G$ with size $\sum_{j = 1}^k|\W_j|+|\W'|$ can be computed in time $\mathcal{O}\big(|V|\cdot |\W'|+\sum_{j=1}^k |\W_j|\big)$.
Furthermore, if the systems $\W_1,\ldots, \W_k$ and $\W'$ are all rational (resp.~integer or unit), then so is $\W$.
\end{corollary}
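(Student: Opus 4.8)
The plan is to apply \Cref{lem:partition-of-g} directly, taking $\mathcal{P}=\{M_1,\dots,M_k\}$ to be the partition of $V(G)$ into maximal strong modules. Since $G$ is both connected and co-connected, this partition exists and is a partition of $V(G)$ into modules (see \Cref{MD}), and the quotient graph $G/\mathcal{P}$ is precisely the representative graph $G'$ of $G$. Setting $\mathcal{I}=\{I_1,\dots,I_k\}$ with the given maximal independent sets $I_j$ of $G[M_j]$, \Cref{lem:partition-of-g} immediately gives that $\W=\bigl(\bigcup_{j=1}^k\W_j\bigr)\cup\{\rho_{\mathcal{I}}(s):s\in\W'\}$ is a well-covering system of $G$, and that $\W$ is rational (resp.\ integer or unit) whenever all of $\W_1,\dots,\W_k,\W'$ are. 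Thus correctness and the statement about coefficient types require no further argument.

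It then remains to analyze the size of $\W$ and the time to produce it. The systems $\W_1,\dots,\W_k$ involve pairwise disjoint sets of variables $\{x_v:v\in M_j\}$, so their union, viewed as a list of equations, has $\sum_{j=1}^k|\W_j|$ equations; together with the $|\W'|$ equations $\rho_{\mathcal{I}}(s)$, one for each $s\in\W'$, we obtain $|\W|=\sum_{j=1}^k|\W_j|+|\W'|$. For the running time, forming $\bigcup_{j=1}^k\W_j$ by concatenation costs $\mathcal{O}\bigl(\sum_{j=1}^k|\W_j|\bigr)$. To build each $\rho_{\mathcal{I}}(s)$ we iterate over the $k$ vertices $v_j$ of $G'$ and replace the term in $x_{v_j}$ by $\sum_{v\in I_j}x_v$; since the modules $M_j$ are disjoint we have $\sum_{j=1}^k|I_j|\le|V|$, and also $k\le|V|$, so each $\rho_{\mathcal{I}}(s)$ is produced in time $\mathcal{O}(|V|)$, hence $\mathcal{O}(|V|\cdot|\W'|)$ over all $s\in\W'$. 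Adding the two contributions yields the claimed bound $\mathcal{O}\bigl(|V|\cdot|\W'|+\sum_{j=1}^k|\W_j|\bigr)$.

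There is no real obstacle: the substance is already contained in \Cref{lem:partition-of-g}, and the corollary follows by specializing $\mathcal{P}$ to the maximal strong modules and doing routine bookkeeping. The only points deserving a word of care are that $G$ being connected and co-connected guarantees $\{M_1,\dots,M_k\}$ is a genuine partition into modules, the identification $G/\mathcal{P}=G'$, and the bounds $\sum_{j=1}^k|I_j|\le|V|$ and $k\le|V|$, which are what keep the per-equation substitution cost at $\mathcal{O}(|V|)$ and thereby absorb the iteration over $V(G')$ into the stated running time.
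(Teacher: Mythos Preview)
Your proposal is correct and matches the paper's own proof essentially line for line: both apply \Cref{lem:partition-of-g} with $\mathcal{P}$ the partition into maximal strong modules, obtain $\W=\bigl(\bigcup_{j=1}^k\W_j\bigr)\cup\{\rho_{\mathcal{I}}(s):s\in\W'\}$, and bound the running time via $\sum_{j=1}^k|I_j|\le|V|$. Your write-up is in fact slightly more explicit than the paper's about why the per-equation substitution cost is $\mathcal{O}(|V|)$, but the argument is the same.
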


\begin{proof}
Let $\mathcal{I} = \{I_j:j\in [k]\}$.
Using the notation of \Cref{lem:partition-of-g}, the lemma implies that it suffices to compute the system of equations $\W = \big(\bigcup_{j=1}^k \W_j \big) \cup \big\{\rho_{\mathcal{I}}(s): s\in \W'\big\}\,.$
This can be done in time
\[\mathcal{O}\left(\sum_{j=1}^k |\W_j|+|\W'|\left(\sum_{j = 1}^k|I_j|\right)\right) =
\mathcal{O}\left(\sum_{j=1}^k |\W_j|+|V|\cdot|\W'|\right)\,,\]
as claimed.
\end{proof}

We now prove the main result of this section, a reduction of the problem of computing a well-covering system of a graph to the same problem on certain prime induced subgraphs of the graph.
We say that a function $f:\mathbb{R}^+\times \mathbb{R}^+ \to \mathbb{R}^+$ is \emph{nondecreasing}
if $0\le x_1\le x_2$ and $0\le y_1\le y_2$ implies $f(x_1,y_1)\le f(x_2,y_2)$, and \emph{superadditive} if the inequality
 \[ f(x_1,y_1)+f(x_2,y_2) \le f(x_1+x_2,y_1+y_2)\]
holds for all $x_1,y_1,x_2,y_2\in \mathbb{R}^+$.
Note that every superadditive function is nondecreasing.

\begin{theorem}\label{thm:modular+Gauss}
Let $\G$ be a class of graphs and $\G^*$ the class of all prime induced subgraphs of graphs in $\G$.
Assume that for each graph $G$ in $\G^*$ with $n$ vertices and $m\ge 1$ edges one can compute in time $f(n,m)$ a rational (resp.~integer or unit) well-covering system of $G$ with size at most $n$, where $f$ is a superadditive function.
Then for any graph $G$ in $\mathcal{G}$ with $n$ vertices and $m$ edges, one can compute in time
$\mathcal{O}\big(f(2n, m) +n^{\omega+1}\big)$ a rational (resp.~integer or unit) well-covering system of $G$ with size at most~$n$.
\end{theorem}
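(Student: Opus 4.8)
The plan is to compute the modular decomposition tree $T_G$ of $G$ in linear time and then process its nodes bottom-up, computing for each node $t$ a rational (resp.\ integer or unit) well-covering system $\W_t$ of the induced subgraph $G_t$ (the subgraph of $G$ induced by the vertices appearing at the leaves below $t$), subject to the invariant $|\W_t|\le n_t$, where $n_t:=|V(G_t)|$. A leaf gets $\W_t=\emptyset$. For an internal node $t$ whose children $t_1,\dots,t_k$ correspond to the maximal strong modules $M_1,\dots,M_k$ of $G_t$ (so $G_{t_j}=G[M_j]$ and $\W_{t_j}$ has already been computed), I would branch on the type of $t$. If $t$ is \emph{parallel}, the $M_j$ are the connected components of $G_t$, so \Cref{cor:disjoint-union} gives $\W_t=\bigcup_{j=1}^k\W_{t_j}$, whose size is at most $\sum_j n_{t_j}=n_t$. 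If $t$ is \emph{series}, the $M_j$ are the co-components, so \Cref{cor:join} produces a well-covering system of $G_t$ of size $\sum_j|\W_{t_j}|+k-1\le 2n_t-1$. If $t$ is \emph{prime}, its label $H_t=R(G_t)$ is a prime graph and an induced subgraph of $G\in\G$, hence $H_t\in\G^*$; since $G_t$ is connected so is $H_t$, and $H_t$ has $n'_t:=k\ge 2$ vertices, so $m_t\ge 1$; the hypothesis then gives, in time $f(n'_t,m_t)$, a well-covering system $\W'$ of $H_t$ with $|\W'|\le n'_t$, a greedy sweep gives a maximal independent set $I_j$ of each $G[M_j]$ in total time $O(|E(G_t)|+n_t)$, and \Cref{cor:modular-decomposition} combines $\{\W_{t_j}\}_j$, $\{I_j\}_j$ and $\W'$ into a well-covering system of $G_t$ of size $\sum_j|\W_{t_j}|+|\W'|\le n_t+n'_t\le 2n_t$. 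In the series and prime cases I would finally apply \Cref{lem:reduced-well-covering} to replace the current system by a linearly independent subsystem $\W_t$ of size at most $n_t$; since this step only discards equations, it preserves the solution set and the rational/integer/unit property.

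Correctness follows by induction on $T_G$, using \Cref{cor:disjoint-union,cor:join,cor:modular-decomposition} (the combined systems are well-covering systems of the respective $G_t$) and \Cref{lem:reduced-well-covering} (which leaves the solution set unchanged); at the root we obtain a well-covering system of $G$ of size at most $n$. It is rational (resp.\ integer or unit) because the leaf systems and the systems supplied by the hypothesis are, and all of \Cref{cor:disjoint-union,cor:join,cor:modular-decomposition} and \Cref{lem:reduced-well-covering} preserve this.

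For the running time, building $T_G$ costs $O(n+m)$; at a node $t$ the combining step costs $O(n_t)$ (parallel), $O(|E(G_t)|+n_t)=O(n_t^2)$ (series), or $f(n'_t,m_t)+O(|E(G_t)|+n_t n'_t)=f(n'_t,m_t)+O(n_t^2)$ (prime); and each subsequent call to \Cref{lem:reduced-well-covering} costs $O(n_t^{\omega-1}\cdot n_t)=O(n_t^\omega)$. Hence the total is $O(n+m)+\sum_{t\text{ prime}}f(n'_t,m_t)+O\!\big(\sum_{t\in V(T_G)}n_t^\omega\big)$. For the overhead term, $n_t\le n$ and $\sum_{t\in V(T_G)}n_t=\sum_{\text{leaf }\ell}|\{t\in V(T_G):\ell\text{ lies below }t\}|\le n\cdot d$, where $d$ is the depth of $T_G$; since by \Cref{leaves in trees} the tree $T_G$ has at most $n-1$ internal nodes, $d=O(n)$, so $\sum_t n_t=O(n^2)$ and $\sum_t n_t^\omega\le n^{\omega-1}\sum_t n_t=O(n^{\omega+1})$, which also dominates the $O(n_t^2)$ costs and $O(n+m)$. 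For the $f$-terms, \Cref{leaves in trees} also yields $\sum_{t\text{ internal}}|\{\text{children of }t\}|=|V(T_G)|-1\le 2n-2$, so $\sum_{t\text{ prime}}n'_t\le 2n$; combining this with the inequality $\sum_{t\text{ prime}}m_t\le m$ (proved below) and using that $f$ is superadditive (hence $\sum_i f(x_i,y_i)\le f(\sum_i x_i,\sum_i y_i)$ by induction) and nondecreasing, we get $\sum_{t\text{ prime}}f(n'_t,m_t)\le f\!\big(\sum_{t\text{ prime}}n'_t,\ \sum_{t\text{ prime}}m_t\big)\le f(2n,m)$. Altogether the running time is $O(f(2n,m)+n^{\omega+1})$.

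The main obstacle is the inequality $\sum_{t\text{ prime}}m_t\le m$, which I would prove by constructing an injection from the disjoint union of the edge sets $E(H_t)$, over all prime nodes $t$, into $E(G)$. An edge of $H_t=R(G_t)$ joins the chosen representatives $r_i\in M_i$ and $r_j\in M_j$ of two \emph{distinct} maximal strong modules of $G_t$, and $r_ir_j\in E(G)$ since $H_t$ is an induced subgraph of $G$; map this edge to $r_ir_j$. Suppose some $e=\{p,q\}\in E(G)$ is the image both of an edge of $H_t$ and of an edge of $H_{t'}$ for distinct prime nodes $t\ne t'$. Then $p,q\in V(G_t)\cap V(G_{t'})$, so $t$ and $t'$ are both ancestors of the leaves $p,q$ in $T_G$ and hence comparable; say $t'$ is a proper descendant of $t$. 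Then $V(G_{t'})$ is contained in a single maximal strong module of $G_t$, so $p$ and $q$ lie in the same module of $G_t$ — contradicting that $e$, as an edge of $H_t$, joins two distinct such modules. Hence the map is injective, giving $\sum_{t\text{ prime}}|E(H_t)|=\sum_{t\text{ prime}}m_t\le|E(G)|=m$.
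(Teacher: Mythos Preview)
Your proposal is correct and follows essentially the same approach as the paper: compute the modular decomposition tree, process it bottom-up using \Cref{cor:disjoint-union,cor:join,cor:modular-decomposition} at parallel, series, and prime nodes respectively, apply \Cref{lem:reduced-well-covering} to keep the invariant $|\W_t|\le n_t$, and bound the total cost via $\sum_{t}n_t^\omega=O(n^{\omega+1})$ together with the superadditivity of $f$ and the edge-injectivity argument giving $\sum_{t\text{ prime}}|E(H_t)|\le m$. The only cosmetic difference is that the paper maintains a maximal independent set $I_t$ recursively at every node (so the $I_j$'s needed in \Cref{cor:join,cor:modular-decomposition} are already available), whereas you recompute them by a greedy sweep at each series/prime node; this costs an extra $O(|E(G_t)|)\le O(n_t^2)$ per node, which is absorbed by the $O(n_t^\omega)$ Gaussian-elimination term and hence does not affect the final bound.
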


\begin{proof}
Let $G$ be a graph in $\G$ with $n$ vertices and $m$ edges.
Let $T_G$ be the modular decomposition tree of $G$.
This tree can be computed in time $\mathcal{O}(n+m)$~\cite{MR1687819,MR2500307}.
Recall that for a node $t$ of $T_G$, we denote by $G_t$ the subgraph of $G$ induced by the vertices appearing in the one-vertex subgraphs labeling the leaves of the subtree of $T_G$ rooted at~$t$. Let $n_t = |V(G_t)|$ and $m_t = |E(G_t)|$.

We traverse the tree $T_G$ bottom-up and for each node $t\in V(T_G)$ we recursively compute a maximal independent set $I_t$ in $G_t$ and a well-covering system $\W_t$ of $G_t$ with size at most $n_t$.
It is important to note that we do not store a complete representation of the graph $G_t$ via adjacency lists, as that would additionally increase the time and space complexity of the procedure.
The ordering in which the nodes of tree $T_G$ are traversed can be computed in time $\mathcal{O}(|V(T_G)|) = \mathcal{O}(n+m)$, for example, by reversing the ordering in which the nodes of $T_G$ are visited by a breadth-first search from the root node.
For each node $t$ of $T_G$, we denote by $C_t$ the set of all children of $t$ in $T_G$.

Assume first that $t$ is a leaf node (that is, $C_t = \emptyset$).
Then $V(G_t) = \{v_t\}$ where $v_t$ is the vertex of $G$ labeling $t$; in particular, $n_t = 1$.
Hence, $I_t = V(G_t)$ is the only maximal independent set in $G_t$ and $\W_t=\emptyset$ is a well-covering system of $G_t$ that trivially satisfies the inequality $|\W_t|\le n_t$.
Both $I_t$ and $\W_t$ can be computed in constant time.

Assume now that $t$ is an internal node in $T_G$.
Then $t$ is one of the types parallel, series, or prime.
Since the subtrees of $T_G$ rooted at the children of $t$ are the modular decomposition trees of the subgraphs of $G_t$ induced by its maximal strong modules, which form a partition of $V(G_t)$, it follows that $n_t = \sum_{u\in C_t}n_u$.
For each child $u$ of $t$ we have already computed a maximal independent set $I_u$ in $G_u$ and a well-covering system $\W_u$ of $G_u$ with size at most $n_u$.
We explain how to efficiently combine these into a maximal independent set $I_t$ in $G_t$ and a well-covering system $\W_t$ of $G_t$ with size at most $ n_t$ for each of the three cases separately.
\begin{itemize}
\item If $t$ is of type parallel, then $G_t$ is a disconnected graph, with connected components $G_{u}$, $u\in C_t$.
We can thus take $I_t = \bigcup_{u\in C_t}I_u$ and by  \cref{cor:disjoint-union},
$\W_t = \bigcup_{u\in C_t}\W_u$.
We have
\[|\W_t| = \sum_{u \in C_t} |\W_{u}|\le \sum_{u\in C_t} n_u=n_t.\]
Furthermore, by \cref{cor:disjoint-union} the well-covering system $\W_t$ of $G_t$ can be computed in time $\mathcal{O}\big(\sum_{u \in C_t} |\W_{u}|\big) = \mathcal{O}\big(|\W_t|\big) = \mathcal{O}(n_t)$.
Since $I_t$ can be computed in time $\mathcal{O}(|V(G_t)|+|E(G_t)|)=\mathcal{O}(n_t+m_t)$, the total time complexity at the parallel node $t$ is $\mathcal{O}(n_t+m_t)$.

\item If $t$ is of type series, then the complement of $G_t$ is disconnected, with co-components $G_{u}$, $u\in C_t$.
We select an arbitrary $u\in C_t$ and set $I_t=I_u$.
Furthermore, we fix an arbitrary ordering $u_1,\ldots, u_p$ of the set $C_t$ and set
\[\widehat{\W_t} = \left(\bigcup_{u\in C_t}\W_u\right) \cup \left\{\sum_{v\in I_{u_j}}x_v - \sum_{v\in I_{u_{j+1}}}x_v=0 : j\in [p-1]\right\}\,.\]
By \cref{cor:join}, $\widehat{\W_t}$ is a well-covering system of $G_t$ that can be computed in time $\mathcal{O}(|V(G_t)|+|E(G_t)| +\sum_{u\in C_t} |\W_u|) = \mathcal{O}(n_t+m_t)$.
The size of $\widehat{\W_t}$ is bounded as follows:
\[
|\widehat{\W_t}| = \sum_{u\in C_t}|\W_u|+|C_t|-1
\le \sum_{u\in C_t}n_u + n_t-1=n_t+n_t-1=2n_t-1.
\]
Furthermore, \cref{lem:reduced-well-covering} implies that a well-covering system $\W_t\subseteq \widehat{\W_t}$ of $G_t$ such that $|\W_t|\le n_t$ can be computed in time $\mathcal{O}(n_t^{\omega-1}\cdot|\widehat{\W_t}|) = \mathcal{O}(n_t^{\omega})$.
Altogether, this implies that the independent set $I_t$ and a well-covering system $\W_t$ of $G_t$ with size at most $n_t$ at the series node $t$ can be computed in time $\mathcal{O}(n_t+m_t+ n_t^{\omega}) = \mathcal{O}(n_t^\omega)$ (since $\omega\ge 2$).

\item Consider now the case when the node $t$ is of type prime.
In this case, the graph $H_t$ labeling the node $t$ is a prime induced subgraph of $G_t$ and hence of $G$.
Each child $u$ of $t$ in $T_G$ corresponds to a unique maximal strong module $M_u$ of $G$.
The graph $H_t$ is the representative graph of $G_t$, hence it contains a unique vertex $v_u$ from each maximal strong module $M_u$ of $G_t$.

\begin{sloppypar}
Since $H_t$ is a prime induced subgraph of $G$, it belongs to $\G^*$ and hence, a well-covering system $\W'$ of $H_t$ with size at most $|V(H_t)|$ can be computed in time $f(|V(H_t)|,|E(H_t)|)$.
Next, we compute in time $\mathcal{O}(|V(H_t)|+|E(H_t)|)$ a maximal independent set $I_t'$ in $H_t$.
Let $C_t' = \{u\in C_t: v_u\in I_t'\}$.
By \cref{lem:ind-set-modules}, the set $I_t=\bigcup_{u\in C_t'} I_{u}$ is a maximal independent set in $G_t$.
By \cref{cor:modular-decomposition}, a well-covering system $\widehat{\W_t}$ of $G_t$ with size $\sum_{u\in C_t}|\W_u|+|\W'|$ can be computed in time $\mathcal{O}\big(|V(G_t)|\cdot |\W'|+\sum_{u\in C_t} |\W_u|\big)$.
Since $|\W'|\le |V(H_t)|\le n_t$, it follows that $|\widehat{\W_t}|\le \sum_{u\in C_t} n_u + n_t=n_t+n_t=2n_t.$
\end{sloppypar}

\begin{sloppypar}
Using \cref{lem:reduced-well-covering}, a well-covering system $\W_t\subseteq \widehat{\W_t}$ of $G_t$ such that $|\W_t|\le n_t$ can be computed in time $\mathcal{O}\big(n_t^{\omega-1}|\widehat{\W_t}|\big)=\mathcal{O}(n_t^\omega)$.
The total time complexity of computing $\W_t$ at the node $t$ is
\begin{align*}
&\mathcal{O}\left(f(|V(H_t)|, |E(H_t)|) + |V(G_t)|\cdot |\W'|+\sum_{u\in C_t} |\W_u| + n_t^\omega\right)\\
=\,&\mathcal{O}(f(|V(H_t)|, |E(H_t)|) + n_t^2 + n_t + n_t^\omega)  \\
=\,&\mathcal{O}( f(|V(H_t)|, |E(H_t)|) + n_t^\omega),
\end{align*}
while the independent set $I_t$ can be computed in time
$\mathcal{O}(|V(H_t)|+|E(H_t)|+|V(G_t)|) =\mathcal{O}(n_t+m_t).$
\end{sloppypar}

Thus, the total time complexity at the prime node $t$ is
$\mathcal{O}(f(|V(H_t)|, |E(H_t)|)  +n_t^\omega) $.
\end{itemize}

It remains to sum up the time complexities over all nodes of $T_G$.
We compute separately the sum over all leaves of $T_G$ and over all internal nodes of $T_G$.
Let us denote by $L$ the set of all leaves of $T_G$.
Recall that by the definition of a modular decomposition tree, the leaves of $T_G$ are in a bijective correspondence with the vertices of $G$, and thus $|L|=n$.
By \cref{leaves in trees} it follows that the number of internal nodes of $T_G$ is at most $n-1$.
Note also that for each internal node $t$, the number of vertices of $H_t$ equals the number of children of $t$ in $T_G$, which implies that the total number of vertices of the graphs $H_t$, summed up over all internal nodes $t$, equals the number of edges of $T_G$, which is at most $|L|+|V(T_G)\setminus L|-1 \le n+(n-1)-1 = 2n-2$.
Furthermore, for each internal node $t$, the edges of $H_t$ correspond to distinct edges of $G$ (joining two vertices of $G_t$ from distinct maximal strong modules), and no two edges from representative graphs of two different internal nodes correspond to the same edge of $G$.
This implies that the total number of edges of the graphs $H_t$, summed up over all internal nodes $t$, is at most $m$.

We already saw that in each leaf $t$ of $T_G$ the algorithm computes the independent set $I_t$ and the well-covering system $\W_t$ in constant time.
Hence, summing over all leaves of $T_G$ we obtain the time complexity of $\mathcal{O}(n)$.
If $t$ is an internal node, then the algorithm computes $I_t$ and $\W_t$ in time $\mathcal{O}(n_t+m_t)$ if $t$ is of type parallel, in time $\mathcal{O}(n_t^\omega)$ if $t$ is of type series, and in time
$\mathcal{O}(f(|V(H_t)|, |E(H_t)|)  +n_t^\omega) $
if $t$ is of type prime.
Furthermore, $|E(H_t)|\le m$.

\begin{sloppypar}
The sum of time complexities over all the internal nodes of $T_G$ can thus be bounded as follows.
\begin{align*}
&\mathcal{O}\Bigg(\sum_{t\in V(T_G)\setminus L} \bigg(  f(|V(H_t)|, |E(H_t)|) + n_t^\omega\bigg)\Bigg)\\
=\, &\mathcal{O}\Bigg(   f\Bigg(\sum_{t\in V(T_G)\setminus L}|V(H_t)|, \sum_{t\in V(T_G)\setminus L}|E(H_t)|\Bigg) +\sum_{t\in V(T_G)\setminus L} n_t^\omega\Bigg)\,=\\
=\, & \mathcal{O}\Bigg( f(2n, m) + n^{\omega+1}\Bigg)\,,
\end{align*}
where the first equality holds since $f$ is a superadditive function and the last one since \hbox{$\sum_{t\in V(T_G)\setminus L}|V(H_t)|\le 2n-2$,} $\sum_{t\in V(T_G)\setminus L}|E(H_t)|\le m$, and $f$ is nondecreasing.
Since the time complexity over all leaves of $T_G$ is $\mathcal{O}(n),$ the total time complexity over all nodes in $T_G$ is equal to
$\mathcal{O}\big(f(2n,m) + n^{\omega+1}\big)$.
Finally, recall that the algorithm first needs $\mathcal{O}(n+m)$ time to compute the modular decomposition tree $T_G$ and an ordering in which the nodes of $T_G$ are visited.
Thus, altogether, the algorithm runs in time
$\mathcal{O}\big(n+m +f(2n,m) + n^{\omega+1}\big) = \mathcal{O}\big(f(2n,m) + n^{\omega+1}\big)$.
\end{sloppypar}
\end{proof}

\begin{remark}\label{remark:modular}
One of the assumptions in \Cref{thm:modular+Gauss} is that for each graph $G$ in $\G^*$ with $n$ vertices and $m$ edges one can compute  in time $f(n,m)$ a well-covering system of $G$ with size at most $n$.
If instead, only an algorithm is available for computing an arbitrary rational (resp.~integer or unit) well-covering system of $G\in \G^*$ in time $f(n,m)$ (that is, without a bound of $n$ on the size of the system), then one can first combine such an algorithm with \Cref{lem:reduced-well-covering}.
This would result in an algorithm that, given a graph $G$ from $\mathcal{G}$ with $n$ vertices and $m$ edges, in time $\mathcal{O}\big(f(2n, m)\cdot n^{\omega-1} +n^{\omega+1}\big)$ computes a rational (resp.~integer or unit) well-covering system of $G$ with size at most $n$.
\end{remark}

\section{Cographs}\label{sec:cographs}

The proof of \cref{thm:modular+Gauss} relies on Gaussian elimination.
If the input graph possesses some additional combinatorial structure, the use of Gaussian elimination may be avoided, and this can lead to faster algorithms.
As we show in this section, this is the case for the class of \emph{cographs}.
Cographs are defined as graphs that can be constructed starting from copies of the one-vertex graph using the operations of disjoint union and complementation (see, e.g.,~\cite{zbMATH01289516}).
Thus, the only prime cograph is the one-vertex graph, and the modular decomposition tree of a cograph contains only parallel and series nodes.

Cographs are known to be exactly the $P_4$-free graphs, that is, graphs that contain no $4$-vertex path as an induced subgraph (see, e.g.,~\cite{MR619603}).
In particular, every cograph is fork-free.
Therefore, it follows from \cref{thm:modular+Gauss} that a well-covering system of a given cograph $G$ with $n$ vertices and $m$ edges can be computed in time $\mathcal{O}(n^{\omega+1})$.
We improve this time complexity as follows.

\begin{theorem}\label{thm:cographs}
Given a cograph $G$ with $n$ vertices and $m$ edges, an integer well-covering system of $G$ with size at most $n-1$ can be computed in time $\mathcal{O}(n(n+m))$.
\end{theorem}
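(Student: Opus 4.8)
The plan is to compute the modular decomposition tree $T_G$ of $G$ and process it from the leaves to the root, using \cref{cor:disjoint-union} at parallel nodes and \cref{cor:join} at series nodes; since the only prime cograph is the single vertex, these are the only internal node types, so the whole computation stays ``unit'' and no Gaussian elimination (hence none of the $n^{\omega}$-type overhead of \cref{thm:modular+Gauss}) is ever invoked. First I would compute $T_G$ in time $\mathcal{O}(n+m)$ \cite{MR1687819,MR2500307}. Then, visiting the nodes of $T_G$ in an order in which each node follows all of its children, I would maintain for every node $t$ a maximal independent set $I_t$ of the induced subgraph $G_t$ together with the list of equations \emph{introduced} at $t$: a leaf node with $V(G_t)=\{v_t\}$ takes $I_t=\{v_t\}$ and introduces no equation; a parallel node $t$ with child set $C_t$ takes $I_t=\bigcup_{u\in C_t}I_u$ and introduces no equation (\cref{cor:disjoint-union}); and a series node $t$ whose children are ordered $u_1,\dots,u_p$ takes $I_t=I_{u_1}$ and introduces the $p-1$ equations $\sum_{v\in I_{u_j}}x_v-\sum_{v\in I_{u_{j+1}}}x_v=0$, $j\in[p-1]$ (\cref{cor:join}). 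The output $\W$ is the union, over all nodes of $T_G$, of the equations introduced at that node; equivalently, letting $\W_t$ be the union over the subtree rooted at $t$, one has $\W=\W_{\mathrm{root}}$, and $\W_t$ is precisely the system produced by the recursive application of \cref{cor:disjoint-union,cor:join} to $G_t$.

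Correctness follows by induction on the subtree rooted at $t$: the base case is immediate, and if $\W_u$ is a well-covering system of $G_u$ for each child $u$ of $t$, then \cref{cor:disjoint-union} (for $t$ parallel) or \cref{cor:join} (for $t$ series) shows $\W_t$ is a well-covering system of $G_t$; the root case gives that $\W$ is a well-covering system of $G$. Each equation introduced at a series node involves only the pairwise disjoint sets $I_{u_1},\dots,I_{u_p}$, so it has all coefficients in $\{-1,0,1\}$; hence $\W$ is a unit, in particular an integer, well-covering system.

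For the size bound I would prove by induction that $|\W_t|\le n_t-1$, where $n_t=|V(G_t)|$. This is clear for a leaf, where $|\W_t|=0=n_t-1$. At a parallel node, $n_t=\sum_{u\in C_t}n_u$ and no equation is added, so $|\W_t|=\sum_{u\in C_t}|\W_u|\le\sum_{u\in C_t}(n_u-1)=n_t-|C_t|\le n_t-2$. At a series node with $p=|C_t|\ge 2$ children, $|\W_t|=\sum_{u\in C_t}|\W_u|+(p-1)\le\sum_{u\in C_t}(n_u-1)+(p-1)=n_t-1$, where the telescoping is exact. Thus the $p-1$ new equations of a series node are exactly offset by the savings accumulated across its $p$ children, and applying the bound at the root gives $|\W|\le n-1$.

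It remains to bound the running time. The algorithm never needs the adjacency structure of an intermediate graph $G_t$ (a maximal independent set of a disjoint union is the union of maximal independent sets of the parts, and a maximal independent set of a join is a maximal independent set of one part), and it only \emph{appends} equations, never rewriting any; so the work at a node $t$, beyond the recursion, is $\mathcal{O}(n_t)$: forming $I_t$ is a concatenation of the $I_u$ (parallel) or a copy of $I_{u_1}$ (series), and the equations introduced at a series node have total length $\sum_{j=1}^{p-1}(|I_{u_j}|+|I_{u_{j+1}}|)\le 2n_t$. Summing over $T_G$ and using that each vertex of $G$ lies in $G_t$ only for the nodes $t$ on the path from its leaf to the root (of which there are at most $n$, since a modular decomposition tree with $n$ leaves has at most $n-1$ internal nodes by \cref{leaves in trees}), we get $\sum_t n_t=\mathcal{O}(n^2)$, so the total time, including the $\mathcal{O}(n+m)$ for building $T_G$, is $\mathcal{O}(n^2+m)=\mathcal{O}(n(n+m))$. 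I expect the only real pitfall to be exactly this bookkeeping: one must resist recopying the current system (or a large induced subgraph) at each node, as that would make the running time quadratic in $|\W|$ rather than linear; there is nothing conceptually subtle beyond it.
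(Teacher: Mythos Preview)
Your proposal is correct and follows essentially the same approach as the paper: compute the modular decomposition tree, process it bottom-up, use \cref{cor:disjoint-union} at parallel nodes and \cref{cor:join} at series nodes, and carry along a maximal independent set $I_t$ at each node; the size bound $|\W_t|\le n_t-1$ is proved by the same induction. Your time analysis is in fact slightly sharper than the paper's: the paper charges $\mathcal{O}(n_t+m_t)$ per node (quoting \cref{cor:join} verbatim, which includes the cost of computing the $I_j$'s from scratch) and then multiplies by the $\mathcal{O}(n)$ internal nodes, whereas you correctly observe that the $I_u$'s are already available from the recursion so no edge of $G_t$ is ever examined and the per-node cost is only $\mathcal{O}(n_t)$, yielding $\sum_t n_t=\mathcal{O}(n^2)$ and an overall bound of $\mathcal{O}(n^2+m)$ rather than $\mathcal{O}(n(n+m))$---the same asymptotically, but a cleaner accounting.
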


\begin{proof}
Let $G$ be a cograph with $n$ vertices and $m$ edges.
Let $T_G$ be the modular decomposition tree of $G$.
As before, given a node $t\in V(T_G)$, we denote by $G_t$ the subgraph of $G$ induced by the vertices appearing in the one-vertex subgraphs labeling the leaves of the subtree of $T_G$ rooted at $t$.
Let $n_t = |V(G_t)|$ and $m_t = |E(G_t)|$.
Since $G$ is a cograph, every internal node of $T_G$ is of type either parallel or series.
We traverse the tree $T_G$ bottom-up and for each node $t\in V(T_G)$ we recursively compute a maximal independent set $I_t$ in $G_t$ and a well-covering system $\W_t$ of $G_t$ with size at most $n_t-1$.
For each node $t$ of $T_G$, we denote by $C_t$ the set of all children of $t$ in $T_G$.

If $t$ is a leaf node (that is, $C_t = \emptyset$), then $I_t = V(G_t)$ is a maximal independent set of $G_t$ and $\W_t=\emptyset$ is a well-covering system of $G_t$, with size $0 = n_t-1$.
Both $I_t$ and $\W_t$ can be computed in constant time.
If $t$ is an internal node in $T_G$, then $t$ is of type either parallel or series.
For each child $u$ of $t$ we have already computed a maximal independent set $I_u$ in $G_u$ and a well-covering system $\W_u$ of $G_u$ with size at most $n_u-1$.
We explain how to efficiently combine these into a maximal independent set $I_t$ in $G_t$ and a well-covering system $\W_t$ of $G_t$ with size at most $ n_t-1$ for both cases.
If $t$ is of type parallel, then $G_t$ is a disconnected graph, with connected components $G_{u}$, $u\in C_t$.
We can thus take $I_t = \bigcup_{u\in C_t}I_u$ and by  \cref{cor:disjoint-union},
$\W_t = \bigcup_{u\in C_t}\W_u$.
We have
\[|\W_t| = \sum_{u \in C_t} |\W_{u}|\le \sum_{u\in C_t} (n_u-1)=n_t-|C_t|\le n_t-1.\]
Furthermore, by \cref{cor:disjoint-union} the well-covering system $\W_t$ of $G_t$ can be computed in time $\mathcal{O}\big(\sum_{u \in C_t} |\W_{u}|\big) = \mathcal{O}\big(|\W_t|\big) = \mathcal{O}(n_t)$.
Since $I_t$ can be computed in time $\mathcal{O}(|V(G_t)|+|E(G_t)|)=\mathcal{O}(n_t+m_t)$, the total time complexity at the parallel node $t$ is $\mathcal{O}(n_t+m_t)$.
If $t$ is of type series, then the complement of $G_t$ is disconnected, with co-components $G_{u}$, $u\in C_t$.
We fix an arbitrary ordering $u_1,\ldots, u_p$ of the set $C_t$ of children of $t$ and obtain the new maximal independent set $I_t$ and a well-covering system $\W_t$ by setting $I_t = I_{u_1}$ and
\[{\W_t} = \left(\bigcup_{u\in C_t}\W_u\right) \cup \left\{\sum_{v\in I_{u_j}}x_v - \sum_{v\in I_{u_{j+1}}}x_v=0 : j\in [p-1]\right\}\,.\]
By \cref{cor:join}, the system ${\W_t}$ is indeed a well-covering system of $G_t$ and can be computed in time $\mathcal{O}(|V(G_t)|+|E(G_t)| +\sum_{u\in C_t} |\W_u|) = \mathcal{O}(n_t+m_t)$.
The size of ${\W_t}$ is bounded as follows:
\[
|\W_t| = \sum_{u\in C_t}|\W_u|+|C_t|-1
\le \sum_{u\in C_t}(n_u-1) + |C_t|-1=n_t-1.
\]
Altogether, this implies that the independent set $I_t$ and a well-covering system $\W_t$ of $G_t$ with size at most $n_t-1$ at the series node $t$ can be computed in time $\mathcal{O}(n_t+m_t)$.

Note that all the well-covering systems computed by the algorithm are integer.
It remains to estimate the time complexity of the algorithm.
The tree $T_G$ can be computed in time $\mathcal{O}(n+m)$~\cite{MR1687819,MR2500307}, and in the same time we can compute an ordering in which the nodes of tree $T_G$ are traversed.
Recall that the number of leaves of $T_G$ is equal to $n$, while from \cref{leaves in trees} it follows that the number of internal nodes of $T_G$ is at most $n-1$.
We already saw that in each leaf $t$ of $T_G$ the algorithm spends only constant time, while in each internal node $t$ of $T_G$ the independent set $I_t$ and a well-covering system $\W_t$ of $G_t$ can be computed in time $\mathcal{O}(n_t+m_t)$.
Summing over all nodes of $T_G$ we get the time complexity
$\mathcal{O} (n+(n-1)\cdot (n+m))=\mathcal{O}(n(n+m)).$
We infer that the total time complexity of the algorithm is $\mathcal{O}(n(n+m))$.
\end{proof}

Let us mention two consequences of \Cref{thm:cographs}.

First, applying the theorem to a given $n$-vertex cograph $G$, we obtain in polynomial time an integer well-covering system $\widehat{\W}$ with size at most $n-1$.
Using Gaussian elimination (cf.~\Cref{lem:reduced-well-covering}), we can then compute in time $\mathcal{O}(n^{\omega})$ a linearly independent well-covering subsystem $\W\subseteq \widehat{\W}$ of $G$.
Consequently, we can compute the well-covered dimension of $G$ as the difference $n-|\W|$.
This implies a result of Brown and Nowakowski~\cite{zbMATH05029664} that the well-covered dimension of cographs can be computed in polynomial time.

Second, a graph $G$ has well-covered dimension equal to zero if and only if the only well-covered weighting of $G$ is the identically zero function, or, equivalently, if $G$ admits no well-covering system with size less than $n$.
Therefore, \Cref{thm:cographs} implies the following.

\begin{corollary}
Every cograph has a strictly positive well-covered dimension.
\end{corollary}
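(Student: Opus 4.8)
The plan is to read this off immediately from \Cref{thm:cographs}, using only the rank--nullity bookkeeping for well-covering systems that was set up in \Cref{sec:prelim}. First I would fix a cograph $G$ on $n\ge 1$ vertices (for $n=0$ there is nothing to prove, and anyway cographs are built from one-vertex graphs) and apply \Cref{thm:cographs} to obtain a well-covering system $\widehat{\W}$ of $G$ with $|\widehat{\W}|\le n-1$. Next I would pass to a linearly independent well-covering subsystem: by \Cref{lem:reduced-well-covering} there is a linearly independent well-covering system $\W\subseteq\widehat{\W}$ of $G$ with $|\W|\le\min\{n,|\widehat{\W}|\}\le n-1$.

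Then I would recall from the remark on the size of well-covering systems that, for a linearly independent well-covering system, the rows of its coefficient matrix form a basis of the orthogonal complement of $\WCW{G}$, so that $|\W|+\wcdim{G}=n$. Combining this with $|\W|\le n-1$ gives $\wcdim{G}=n-|\W|\ge 1$, which is exactly the assertion of the corollary. Equivalently, one can simply invoke the equivalence recorded just before the statement: since $G$ admits a well-covering system of size strictly less than $n$, its well-covered dimension is not zero.

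There is really no obstacle here; all the work is already done in \Cref{thm:cographs}, whose point is precisely the bound ``size at most $n-1$'', and the rest is the standard observation that a well-covering system of size below $n$ cannot have full rank. The only thing worth a sentence is the degenerate check that the argument covers the one-vertex cograph, where $\widehat{\W}=\emptyset$ has size $0=n-1$ and indeed $\wcdim{G}=1$ (every weighting of a single vertex is a well-covered weighting).
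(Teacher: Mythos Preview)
Your argument is correct and is essentially the same as the paper's: both derive the corollary directly from \Cref{thm:cographs} via the observation that a well-covering system of size at most $n-1$ forces $\wcdim{G}\ge 1$. You spell out the rank--nullity step through \Cref{lem:reduced-well-covering} a bit more explicitly than the paper does, but the underlying reasoning is identical.
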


An alternative proof of this result could be obtained by using the fact that every cograph is equistable (see~\cite{MR1268776}).

\section{Reduction to anti-neighborhoods}\label{sec:anti-neighborhoods}

In this section we focus on the subgraphs of a given graph obtained by deletion of the closed neighborhood of some vertex in the graph.
Given a graph $G$ with vertex set $\{v_1,\ldots, v_n\}$, we denote by $G_j$ the graph $G-N[v_j]$, for all $j\in [n]$.
We first show that, given a well-covering system of the graph $G_j$, for all $j\in [n]$, we can efficiently compute a well-covering system of~$G$.

\begin{lemma}
\label{lem:equations-anti-neighborhood}
Let $G$ be a graph with vertex set $\{v_1,\dots, v_n\}$.
For each $j\in [n]$ let $\W_j$ be a rational (resp.~integer or unit) well-covering system of $G-N[v_j]$ and $I_j$ a maximal independent set of $G-N[v_j]$.
Then
\[\left(\bigcup_{j=1}^n \W_j\right) \cup \left\{\sum_{v\in I_j\cup \{v_j\}}x_v - \sum_{v\in I_{j+1}\cup \{v_{j+1}\}}x_v=0 :  j\in [n-1]\right\}\]
is a rational (resp.~integer or unit) well-covering system of $G$.
\end{lemma}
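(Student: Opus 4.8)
The plan is to reduce the statement to one structural observation about maximal independent sets. \textbf{Observation:} for each $j\in[n]$, the maximal independent sets of $G$ that contain $v_j$ are precisely the sets $J\cup\{v_j\}$ with $J$ a maximal independent set of $G-N[v_j]$. The ``only if'' direction holds because if $v_j\in I$ and $I$ is maximal independent in $G$, then $I\setminus\{v_j\}=I\cap V(G-N[v_j])$ is independent in $G-N[v_j]$ and it is maximal there: any $u\in V(G-N[v_j])\setminus I$ has a neighbour in $I$ by maximality of $I$, and that neighbour cannot be $v_j$ since $u\notin N[v_j]$, so it lies in $I\setminus\{v_j\}$. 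The ``if'' direction is equally quick: $v_j$ is anticomplete to $V(G-N[v_j])$, so $J\cup\{v_j\}$ is independent, and it is maximal since any $u\notin J\cup\{v_j\}$ with $u\ne v_j$ is either adjacent to $v_j$ (when $u\in N(v_j)$) or, being in $V(G-N[v_j])\setminus J$, has a neighbour in $J$.

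With this in hand I would show that a weight function $w$ on $V(G)$ is a well-covered weighting of $G$ if and only if it satisfies every equation of the proposed system $\W$. For the forward implication, fix $j$: for any two maximal independent sets $J,J'$ of $G-N[v_j]$, the sets $J\cup\{v_j\}$ and $J'\cup\{v_j\}$ are maximal independent in $G$, so $w(J\cup\{v_j\})=w(J'\cup\{v_j\})$ and hence $w(J)=w(J')$; thus the restriction of $w$ to $V(G-N[v_j])$ is a well-covered weighting of $G-N[v_j]$ and therefore satisfies $\W_j$, and so does $w$ itself since $\W_j$ only involves variables indexed by $V(G-N[v_j])$. Specialising to $J=I_j$ and $J'=I_{j+1}$ --- now viewed inside $G-N[v_j]$ and $G-N[v_{j+1}]$ respectively --- gives $w(I_j\cup\{v_j\})=w(I_{j+1}\cup\{v_{j+1}\})$, i.e.\ the $j$-th bridging equation. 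For the converse, assume $w$ satisfies every equation of $\W$. Since each $\W_j$ is a well-covering system of $G-N[v_j]$ involving only the variables indexed by $V(G-N[v_j])$, the restriction of $w$ to $V(G-N[v_j])$ is a well-covered weighting of $G-N[v_j]$. Take arbitrary maximal independent sets $I,I'$ of $G$; each is nonempty, so fix $v_a\in I$ and $v_b\in I'$. By the Observation, $I\setminus\{v_a\}$ and $I_a$ are maximal independent in $G-N[v_a]$, so $w(I\setminus\{v_a\})=w(I_a)$, whence $w(I)=w(I_a)+w(v_a)=w(I_a\cup\{v_a\})$; likewise $w(I')=w(I_b\cup\{v_b\})$. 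The bridging equations give $w(I_j\cup\{v_j\})=w(I_{j+1}\cup\{v_{j+1}\})$ for all $j\in[n-1]$, so by transitivity $w(I_a\cup\{v_a\})=w(I_b\cup\{v_b\})$ and therefore $w(I)=w(I')$. As $I,I'$ were arbitrary, $G$ is $w$-well-covered.

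Finally, the claim about coefficients is routine: in the $j$-th bridging equation the coefficient of a variable $x_v$ is $1$, $-1$, or $0$ according to whether $v$ lies only in $I_j\cup\{v_j\}$, only in $I_{j+1}\cup\{v_{j+1}\}$, or in both or neither (a vertex in both sets simply cancels), so each bridging equation is unit; hence $\W$ is rational (resp.\ integer or unit) whenever all the $\W_j$ are. I do not expect a genuine obstacle here --- the argument is the anti-neighborhood counterpart of \Cref{lem:ind-set-modules} and \Cref{lem:partition-of-g}. The only points needing care are verifying maximality in both directions of the Observation, and keeping track of which variables each $\W_j$ involves so that ``satisfies $\W_j$'' may be passed freely between $w$ and its restriction to $V(G-N[v_j])$.
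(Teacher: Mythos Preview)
Your proof is correct and follows essentially the same approach as the paper: both reduce to the equivalence ``$G$ is $w$-well-covered iff each $G-N[v_j]$ is $w_j$-well-covered and the bridging equalities $w(I_j\cup\{v_j\})=w(I_{j+1}\cup\{v_{j+1}\})$ hold,'' proved via the bijection between maximal independent sets of $G$ containing $v_j$ and maximal independent sets of $G-N[v_j]$. You make this bijection (your Observation) explicit whereas the paper uses it implicitly, and you spell out the unit-coefficient claim in slightly more detail, but the argument is the same.
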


\begin{proof}
Let $G$ be a graph and let $w$ be a vertex weight function on $G$.
For each $j\in [n]$ let $G_j$ denote the graph $G-N[v_j]$ and $w_j$ the restriction of $w$ to $V(G_j)$.
We show the following claim:
$G$ is $w$-well-covered if and only if for all $j\in[n]$ it holds that $G_j$ is $w_j$-well-covered and for all $j\in [n-1]$ it holds that $w(I_j\cup\{v_j\})=w(I_{j+1}\cup\{v_{j+1}\})$.
From the claim we get that the equations from the well-covering systems $\W_j$ of $G_j$, over all $j\in [n]$, along with the equations of the form \[\sum_{v\in I_j\cup \{v_j\}}x_v - \sum_{v\in I_{j+1}\cup \{v_{j+1}\}}x_v=0\] for $j\in [ n-1]$, form a well-covering system of $G$.

Let us prove the claim.
Assume first that $G$ is $w$-well-covered.
Let $j\in[n]$ and let $I$ and $I'$ be maximal independent sets in $G_j$.
Then the sets $I\cup \{v_j\}$ and $I'\cup \{v_j\}$ are maximal independent sets in $G$.
Since $G$ is $w$-well-covered, it holds that $w(I\cup \{v_j\})=w(I'\cup \{v_j\})$.
Consequently, we have that
\[w_j(I)=w(I)=w(I\cup \{v_j\})-w(v_j)=w(I'\cup \{v_j\})-w(v_j)=w(I')=w_j(I')\,,\]
and $G_j$ is $w_j$-well-covered.
Consider now an arbitrary $j\in [n-1]$.
Since $I_j$ and $I_{j+1}$ are maximal independent sets in $G_j$ and $G_{j+1}$, respectively, the sets $I_j\cup \{v_j\}$ and $I_{j+1}\cup \{v_{j+1}\}$ are maximal independent sets in $G$.
Since $G$ is $w$-well-covered, it follows that $w(I_j\cup \{v_j\})=w(I_{j+1}\cup \{v_{j+1}\})$, which is what we wanted to show.

For a proof of the other direction, assume that for all $j\in [n]$ it holds that $G_j$ is \hbox{$w_j$-well-covered} and for all $j\in [ n-1]$ it holds that $w(I_j\cup\{v_j\})=w(I_{j+1}\cup\{v_{j+1}\})$.
In particular, this implies that $w(I_j\cup\{v_j\})=w(I_k\cup\{v_k\})$ for all $j,k \in [n]$.
We want to prove that $G$ is $w$-well-covered.
Let $I$ and $I'$ be maximal independent sets in $G$ and let $v_j\in I$ and $v_k\in I'$.
Note that $I\setminus \{v_j\}$ and $I'\setminus \{v_k\}$ are maximal independent sets in $G_j$ and $G_k$, respectively.
By assumption $G_j$ is \hbox{$w_j$-well-covered} and $G_k$ is \hbox{$w_k$-well-covered}, and thus we have that $w(I\setminus \{v_j\}) = w_j(I\setminus \{v_j\}) = w_j(I_j) = w(I_j)$ and, similarly, $w(I'\setminus \{v_k\})= w(I_k)$.
Consequently,
\[w(I)=w(I\setminus \{v_j\})+w(v_j)=w(I_j) + w(v_j) = w(I_j\cup\{v_j\})\]
and
\[w(I')=w(I'\setminus \{v_k\})+w(v_k)=w(I_{k}) + w(v_{k})= w(I_k\cup\{v_k\})\,.\]
The above two expressions are equal by assumption, so we get $w(I)=w(I')$ and thus $G$ is $w$-well-covered.
\end{proof}

\begin{corollary}
\label{cor:anti-neighborhood}
Let $G$ be a graph with vertex set $\{v_1,\dots, v_n\}$.
For each $j\in [n]$ let $\W_j$ be a rational (resp.~integer or unit) well-covering system of $G-N[v_j]$.
Then a rational (resp.~integer or unit) well-covering system of $G$ with size $\sum_{j = 1}^n|\W_j|+n-1$ can be computed in time $\mathcal{O}(n(n+m)+\sum_{j = 1}^n|\W_j|)$, where $m = |E(G)|$.
\end{corollary}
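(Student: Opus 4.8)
The plan is to read the desired well-covering system straight off \Cref{lem:equations-anti-neighborhood}: that lemma already provides an explicit well-covering system of $G$ built from the given systems $\W_1,\dots,\W_n$ and from arbitrary maximal independent sets $I_j$ of the graphs $G-N[v_j]$. So the only work is to produce the sets $I_j$ efficiently, to write down the combined system, and to verify the claimed size and coefficient type.

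First I would, for each $j\in[n]$, construct the graph $G_j := G-N[v_j]$ explicitly: scan the adjacency list of $v_j$ to mark the vertices of $N[v_j]$, then take the unmarked vertices (other than $v_j$) and retain each edge of $G$ whose endpoints are both unmarked; this costs $\mathcal{O}(n+m)$ per vertex. On $G_j$ I would run the obvious greedy algorithm to obtain a maximal independent set $I_j$, again in time $\mathcal{O}(n+m)$. Over all $j$ this contributes $\mathcal{O}(n(n+m))$. I would then output
\[\left(\bigcup_{j=1}^n \W_j\right) \cup \left\{\sum_{v\in I_j\cup \{v_j\}}x_v - \sum_{v\in I_{j+1}\cup \{v_{j+1}\}}x_v=0 :  j\in [n-1]\right\},\]
which is a well-covering system of $G$ by \Cref{lem:equations-anti-neighborhood}. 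Listing $\bigcup_{j=1}^n\W_j$ takes time $\mathcal{O}(\sum_{j=1}^n|\W_j|)$, and each of the $n-1$ remaining equations has at most $|I_j|+|I_{j+1}|+2 = \mathcal{O}(n)$ terms (since $I_j\subseteq V(G_j)\subseteq V(G)$), so these can be written in $\mathcal{O}(n^2)\subseteq\mathcal{O}(n(n+m))$ time. Hence the total running time is $\mathcal{O}(n(n+m)+\sum_{j=1}^n|\W_j|)$, and the system has size exactly $\sum_{j=1}^n|\W_j|+(n-1)$.

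It remains to check the coefficient type. The union $\bigcup_{j=1}^n\W_j$ is rational (resp.~integer or unit) whenever each $\W_j$ is, and in each of the $n-1$ additional equations every variable appears with a coefficient in $\{-1,0,1\}$; here I would note that $v_j\notin V(G_j)$ and therefore $v_j\notin I_j$, so $I_j\cup\{v_j\}$ and $I_{j+1}\cup\{v_{j+1}\}$ are genuine repetition-free index sets, and a vertex lying in both of them merely contributes coefficient $1-1=0$. Consequently the full system inherits the rational / integer / unit property exactly as recorded in \Cref{lem:equations-anti-neighborhood}. There is no genuinely hard step here; the only point needing a little care is the time bookkeeping — specifically, checking that forming $G-N[v_j]$ together with a greedy maximal independent set of it stays within $\mathcal{O}(n+m)$ per vertex, so that the global cost is the stated $\mathcal{O}(n(n+m))$ and not larger.
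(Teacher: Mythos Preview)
Your proposal is correct and follows essentially the same route as the paper: invoke \Cref{lem:equations-anti-neighborhood} after computing each $G-N[v_j]$ and a greedy maximal independent set $I_j$ in $\mathcal{O}(n+m)$ time, then tally the $\mathcal{O}(n(n+m)+\sum_j|\W_j|)$ cost and the size $\sum_j|\W_j|+n-1$. Your write-up is in fact more detailed than the paper's (you spell out the construction of $G_j$, the $\mathcal{O}(n^2)$ cost for the extra equations, and the coefficient-type check), but the underlying argument is identical.
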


\begin{proof}
In time $\mathcal{O}(n(n+m))$ we compute the graphs $G-N[v_j]$ for all $j\in [n]$ and a maximal independent set $I_j$ in each such graph.
Then, using Lemma~\ref{lem:equations-anti-neighborhood} we compute a well-covering system of $G$ in time $\mathcal{O}(\sum_{j=1}^n |\W_j|+n^2)$.
The total complexity of this approach is $\mathcal{O}(n(n+m)+\sum_{j = 1}^n|\W_j|)$, as claimed.
\end{proof}

Using the above result, we give a more general statement, which will be an ingredient of the main algorithm in this paper.

\begin{theorem}\label{thm:reduction-to-anti-neighborhoods}
Let $\G$ and $\G^*$ be two graph classes such that for every graph $G$ in $\G$ and every vertex $v$ of $G$ the graph $G-N[v]$ is in $\G^*$.
Assume that for each graph $G$ in $\G^*$ with $n$ vertices and $m$ edges one can compute in time $f(n,m)$ a rational (resp.~integer or unit) well-covering system of $G$ with size at most $g(n,m)$, where $f$ and $g$ are nondecreasing functions.
Then for any graph $G$ in $\mathcal{G}$ with $n$ vertices and $m$ edges, one can compute in time $\mathcal{O}(n\cdot(n+m + f(n,m)))$ a rational (resp.~integer or unit) well-covering system of $G$ with size at most $n\cdot g(n,m)+n-1$.
\end{theorem}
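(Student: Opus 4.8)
The plan is to apply \Cref{cor:anti-neighborhood}, using the hypothesis on $\G^*$ to supply a well-covering system for each anti-neighborhood $G-N[v]$. Let $G$ be a graph in $\G$ with vertex set $\{v_1,\dots,v_n\}$ and $m$ edges. First I would, for each $j\in[n]$, build the graph $G_j := G-N[v_j]$; by hypothesis each $G_j$ lies in $\G^*$, and this construction costs $\mathcal{O}(n+m)$ per vertex. Writing $n_j=|V(G_j)|$ and $m_j=|E(G_j)|$, we have $n_j\le n$ and $m_j\le m$, so applying the assumed algorithm for $\G^*$ to $G_j$ yields, in time $f(n_j,m_j)\le f(n,m)$, a rational (resp.~integer or unit) well-covering system $\W_j$ of $G_j$ with $|\W_j|\le g(n_j,m_j)\le g(n,m)$; here I use that $f$ and $g$ are nondecreasing. (If $G_j$ is edgeless — in particular if it has no vertices — one may simply take $\W_j=\emptyset$.)

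Next I would feed the systems $\W_1,\dots,\W_n$ into \Cref{cor:anti-neighborhood}, which returns a rational (resp.~integer or unit) well-covering system $\W$ of $G$ of size
\[
|\W|=\sum_{j=1}^n|\W_j|+n-1\le n\cdot g(n,m)+n-1,
\]
matching the claimed size bound.

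For the running time, constructing all the graphs $G_j$ together with a maximal independent set in each costs $\mathcal{O}(n(n+m))$ by \Cref{cor:anti-neighborhood}'s preprocessing, and computing the $n$ systems costs $\sum_{j=1}^n f(n_j,m_j)\le n\cdot f(n,m)$. Since merely writing down $\W_j$ already takes time $f(n_j,m_j)$, we have $|\W_j|=\mathcal{O}(f(n,m))$, hence $\sum_{j=1}^n|\W_j|=\mathcal{O}(n\,f(n,m))$, so the call to \Cref{cor:anti-neighborhood} runs in time $\mathcal{O}\big(n(n+m)+\sum_{j=1}^n|\W_j|\big)=\mathcal{O}\big(n(n+m)+n\,f(n,m)\big)$. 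Summing the contributions gives total running time $\mathcal{O}\big(n(n+m+f(n,m))\big)$, as required.

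I do not expect a genuine obstacle: the argument is essentially bookkeeping layered on top of \Cref{cor:anti-neighborhood}. The only points that need attention are invoking the monotonicity of $f$ and $g$ to pass from the local parameters $(n_j,m_j)$ of the individual anti-neighborhoods to the global parameters $(n,m)$, and the observation that $|\W_j|=\mathcal{O}(f(n_j,m_j))$, so that the $\sum_j|\W_j|$ term appearing in the cost of \Cref{cor:anti-neighborhood} is absorbed into $n\,f(n,m)$ rather than introducing a separate dependence on $g$ in the time bound.
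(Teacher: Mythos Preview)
Your proposal is correct and follows essentially the same approach as the paper: build each $G_j=G-N[v_j]$, invoke the assumed algorithm for $\G^*$ together with the monotonicity of $f$ and $g$, and feed the resulting systems into \Cref{cor:anti-neighborhood}. In particular, your observation that $|\W_j|=\mathcal{O}(f(n,m))$ (so that $\sum_j|\W_j|$ is absorbed into $n\,f(n,m)$) is exactly the bound the paper uses to get the stated running time.
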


\begin{proof}
Let $G$ be a graph in $\G$ with vertex set $V(G)=\{v_1,\ldots, v_n\}$ and let $m = |E(G)|$.
For all $j\in [n]$, let $G_j=G-N[v_j]$.
The graphs $G_j$, $j\in [n]$, can be computed in time $\mathcal{O}(n(n+m))$.
By assumption, for each $j\in[n]$ the graph $G_j$ is in $\G^*$, and hence a rational (resp.~integer or unit) well-covering system $\W_j$ of $G_j$ with at most $g(|V(G_j)|,|E(G_j)|)\le g(n,m)$ equations can be computed in time $f(|V(G_j)|, |E(G_j)|) \le f(n,m)$.
Note also that $|\W_j|\le f(|V(G_j)|, |E(G_j)|) \le f(n,m)$.
By~\Cref{cor:anti-neighborhood}, a well-covering system of $G$ with size $\sum_{j = 1}^n|\W_j|+n-1\le n\cdot g(n,m)+n-1$ can be computed in time $\mathcal{O}(n(n+m)+\sum_{j = 1}^n|\W_j|) = \mathcal{O}(n\cdot (n+m+ f(n,m))$.
\end{proof}

\section{Fork-free graphs}\label{sec:fork-free}

By \Cref{thm:cographs}, a well-covering system of a given cograph can be computed in polynomial time.
In this section, we generalize the result of~\Cref{thm:cographs} to prove the main result of this paper, a polynomial-time algorithm for computing a well-covering system of a given fork-free graph.
This is a significant generalization of \Cref{thm:cographs}, since the class of fork-free graphs also generalizes the class of claw-free graphs.
Our approach combines the results from~\Cref{sec:prime,sec:anti-neighborhoods} with a known structural result on fork-free graphs, which allows us to reduce the problem to the class of claw-free graphs, for which the following theorem applies.

\begin{theorem}[Levit and Tankus~\cite{levit2015weighted}]
\label{thm:claw-free}
There exists an $\mathcal{O}(n^3m^{3/2})$ algorithm, which receives as input a claw-free graph $G$ with $n$ vertices and $m\ge 1$ edges and computes a unit well-covering system of $G$.
\end{theorem}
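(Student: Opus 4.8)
This statement is due to Levit and Tankus~\cite{levit2015weighted}; below I describe the shape their argument takes rather than reproving it from scratch. The first reduction is to assume that $G$ is connected: by \Cref{cor:disjoint-union} a unit well-covering system of $G$ is obtained by concatenating unit well-covering systems of its connected components, and each component is again claw-free. For connected claw-free $G$, the core is a characterization of well-covered weightings by a family of \emph{local} equations. Concretely, one considers \emph{generating subgraphs} of $G$: induced subgraphs $B$ together with an independent ``witness'' set $S$ disjoint from and anticomplete to $V(B)$ such that, for several maximal independent sets $J$ of $B$, the set $S\cup J$ is a maximal independent set of $G$; such a $B$ contributes the equation stating that all these sets $J$ have equal $w$-weight, an equation of the form $\sum_{v\in A}x_v-\sum_{v\in A'}x_v=0$ with $A,A'$ disjoint vertex subsets. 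One then shows that $w$ is a well-covered weighting of $G$ if and only if $w$ satisfies all equations arising from generating subgraphs; the substantive (``if'') direction rests on the fact that any two maximal independent sets of $G$ can be connected by a sequence of exchanges each of which is witnessed by a generating subgraph.

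The decisive use of claw-freeness is in bounding and enumerating these configurations in polynomial time. Since in a claw-free graph every vertex has at most two neighbours in any independent set, the relevant sets $A\cup A'$ behave like alternating paths and cycles (mirroring the structure of $M\triangle M'$ for two maximal matchings of an auxiliary graph, via the line-graph picture). The plan is therefore to loop over a polynomial number of small ``seeds'' --- a vertex, an edge, or a short configuration --- and, for each seed, to decide via an auxiliary matching / alternating-path computation in a graph of the form $G-N[\cdot]$ whether the seed extends to a generating subgraph, and if so to read off the associated equation. The $\mathcal{O}(m^{3/2})$ factor in the running time is exactly the cost of such a matching subroutine (recall that for a connected graph $n=\mathcal{O}(m)$), and the $\mathcal{O}(n^3)$ factor comes from iterating over the $\mathcal{O}(n^3)$ seed/configuration choices while, where needed, recomputing maximal independent sets. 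Every equation produced is a difference of two $\{0,1\}$-indicator sums, so all coefficients lie in $\{-1,0,1\}$ and the resulting well-covering system is automatically unit.

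To finish, one checks soundness --- any well-covered weighting equalises the weights of $J$ and $J'$ for every generating subgraph, which is immediate --- and completeness, namely that a weighting satisfying every enumerated equation makes all pairs of maximal independent sets of $G$ of equal weight; the running time is then bounded as above. I expect the main obstacle to be precisely this combination: proving that the family of generating subgraphs (equivalently, of seeds) is both \emph{complete} (rich enough to force $w$-well-coveredness) and \emph{polynomially enumerable} (which is where the claw-free condition is genuinely needed, just as in Tankus and Tarsi's treatment of the unweighted recognition problem~\cite{zbMATH00934795,zbMATH01006762}); this structural analysis of how maximal independent sets in claw-free graphs are related constitutes the technical heart of~\cite{levit2015weighted}.
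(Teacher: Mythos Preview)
The paper does not prove this theorem; it is quoted from Levit and Tankus~\cite{levit2015weighted} and used as a black box. Your sketch of their argument---via generating subgraphs, the soundness/completeness dichotomy, and polynomial enumeration using matching subroutines in anti-neighbourhoods---is broadly faithful to that reference, so there is no proof in the present paper to compare against.
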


Following~\Cref{remark:modular} and the fact that the function $f$ defined by the rule $f(n,m) = n^{\omega+2}m^{3/2}$ for all $m,n\ge 0$, is superadditive, \Cref{thm:claw-free} has the following consequence.

\begin{corollary}\label{cor:prime-are-claw-free}
Let $\C$ be the class of all graphs $G$ such that every prime induced subgraph of $G$ is claw-free.
Then for any graph $G$ in $\C$ with $n$ vertices and $m\ge 1$ edges, one can compute in time $\mathcal{O}\big(n^{\omega+2}m^{3/2}\big)$ a unit well-covering system of $G$ with size at most $n$.
\end{corollary}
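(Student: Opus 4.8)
The plan is to apply \Cref{thm:modular+Gauss} with $\G := \C$, so that the associated class $\G^*$ of all prime induced subgraphs of graphs in $\G$ consists, by the very definition of $\C$, entirely of claw-free graphs. Hence \Cref{thm:claw-free} is available on all of $\G^*$, and what remains is to verify that the hypothesis of \Cref{thm:modular+Gauss} holds for the function $f$ given by $f(n,m) = n^{\omega+2}m^{3/2}$: namely, that $f$ is superadditive and that every claw-free graph $G$ with $n$ vertices and $m \ge 1$ edges admits a unit well-covering system of size at most $n$ computable in time $f(n,m)$.

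For the latter I would follow the route described in \Cref{remark:modular}. Given such a $G$, \Cref{thm:claw-free} produces a unit well-covering system $\widehat{\W}$ of $G$ in time $\mathcal{O}(n^3 m^{3/2})$; note that this running time trivially bounds $|\widehat{\W}|$ as well, although it gives no bound of the form $|\widehat{\W}| \le n$. Feeding $\widehat{\W}$ into \Cref{lem:reduced-well-covering} then yields, in additional time $\mathcal{O}(n^{\omega-1}|\widehat{\W}|) = \mathcal{O}(n^{\omega+2}m^{3/2})$, a linearly independent well-covering subsystem $\W \subseteq \widehat{\W}$ with $|\W| \le \min\{n,|\widehat{\W}|\} \le n$; since $\W$ is a subsystem of the unit system $\widehat{\W}$, it is itself unit. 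Adding the two running times and using $\omega \ge 2$ gives a total of $\mathcal{O}(n^{\omega+2}m^{3/2}) = f(n,m)$, as needed. As for superadditivity, since $t \mapsto t^a$ is superadditive on $\mathbb{R}^+$ for every $a \ge 1$ and both exponents $\omega+2 \ge 4$ and $3/2$ are at least $1$, we have $(x_1+x_2)^{\omega+2}(y_1+y_2)^{3/2} \ge (x_1^{\omega+2}+x_2^{\omega+2})(y_1^{3/2}+y_2^{3/2}) \ge x_1^{\omega+2}y_1^{3/2} + x_2^{\omega+2}y_2^{3/2}$, which is exactly the inequality $f(x_1,y_1)+f(x_2,y_2) \le f(x_1+x_2,y_1+y_2)$; and $f$ clearly takes nonnegative values.

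With the hypothesis checked, \Cref{thm:modular+Gauss} applied to an arbitrary $G \in \C$ with $n$ vertices and $m$ edges produces a unit well-covering system of $G$ of size at most $n$ in time $\mathcal{O}\big(f(2n,m) + n^{\omega+1}\big)$. Finally, $f(2n,m) = 2^{\omega+2}n^{\omega+2}m^{3/2} = \mathcal{O}(n^{\omega+2}m^{3/2})$, and since $m \ge 1$ this term dominates $n^{\omega+1}$, so the running time is $\mathcal{O}(n^{\omega+2}m^{3/2})$, the claimed bound. There is no real obstacle, as the argument is just a composition of \Cref{thm:claw-free}, \Cref{lem:reduced-well-covering}, and \Cref{thm:modular+Gauss}; the single point that warrants care is that the Levit--Tankus algorithm provides no $n$-size bound on its output, so \Cref{lem:reduced-well-covering} is essential to trim it --- and it is precisely because this trimming costs only a factor $n^{\omega-1}$ and because $\omega+2 > 3$ that the final exponent stays at $\omega+2$.
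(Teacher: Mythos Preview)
Your proof is correct and follows essentially the same approach as the paper, which simply points to \Cref{remark:modular} together with the superadditivity of $f(n,m)=n^{\omega+2}m^{3/2}$ as the justification. You have just made explicit the two ingredients the paper leaves implicit: the trimming of the Levit--Tankus output via \Cref{lem:reduced-well-covering} (so that \Cref{thm:modular+Gauss} applies with the stated $f$) and the verification that $f$ is superadditive.
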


To apply~\Cref{cor:prime-are-claw-free}, we use the following structural result on fork-free graphs due to Lozin and Milani\v{c}~\cite{MR2373852,lozin2008polynomial}.\footnote{The result is stated incorrectly in the paper~\cite{lozin2008polynomial}.
It is stated correctly in the conference version of that work~\cite{MR2373852}, as well as in the PhD thesis~\cite[Theorem 3.1.2]{milanic2007algorithmic}, and it is reproved by Dyer, Martin, Jerrum, M\"{u}ller, and Vu{\v{s}}kovi\'{c}~in~\cite{MR4276974}.}

\begin{theorem}
\label{thm:fork-free}
Let $G$ be a prime fork-free graph, let $x$ be a vertex of $G$, and let $G'$ be a prime induced subgraph of the graph $G-N[x]$.
Then $G'$ is claw-free.
\end{theorem}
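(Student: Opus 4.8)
The plan is to argue by contradiction: assuming $G'$ contains an induced claw, I will exhibit an induced fork in $G$, which is impossible since $G$ is fork-free. So suppose $\{c,a,b,d\}$ induces a claw in $G'$ with center $c$, i.e.\ $c$ is adjacent to each of $a,b,d$ and $a,b,d$ are pairwise nonadjacent. Since $G'$ is an induced subgraph of $G-N[x]$, and hence of $G$, this is also an induced claw of $G$, and moreover every vertex of $V(G')$ — in particular each of $c,a,b,d$ — is a nonneighbor of $x$ in $G$. When I describe a fork below, I specify which of its vertices is the degree-three vertex, which is the degree-two vertex, and which are the three leaves.

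I would first use that $G$, being prime, is connected. Put $Y=\{c,a,b,d\}$. Since $x\notin Y$ and $x$ has no neighbor in $Y$, a shortest path from $x$ to $Y$ in $G$ has the form $x=u_0,u_1,\dots,u_\ell$ with $\ell\ge 2$, $u_\ell\in Y$, and $u_0,\dots,u_{\ell-1}\notin Y$. Write $u=u_{\ell-1}$ and $u'=u_{\ell-2}$, where $u'=x$ when $\ell=2$. By minimality of the path, $u'$ has no neighbor in $Y$, while $u$ is adjacent to $u'$ and to the nonempty set $Z:=N_G(u)\cap Y$; also $u,u'\notin Y$ and $u\ne u'$. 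I would then dispose of the cases in which $u$ is adjacent to at most one leaf, each of which yields a fork at once: if $Z=\{c\}$, then $\{u',u,c,a,b\}$ induces a fork with degree-three vertex $c$, degree-two vertex $u$, and leaves $u',a,b$; if $Z=\{c,y\}$ for a leaf $y$, then $\{u',u,c\}\cup(\{a,b,d\}\setminus\{y\})$ induces a fork with degree-three vertex $c$, degree-two vertex $u$, and leaves $u'$ together with the two leaves other than $y$; if $Z=\{y\}$ for a single leaf $y$, then $\{u,y,c\}\cup(\{a,b,d\}\setminus\{y\})$ induces a fork with degree-three vertex $c$, degree-two vertex $y$, and leaves $u$ together with the two leaves other than $y$. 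Each of these is routine to verify from the facts that $u'$ has no neighbor in $Y$, that $N_G(u)\cap Y=Z$, that $c,a,b,d$ form a claw, and that the five chosen vertices are distinct. Hence we may assume that $u$ is adjacent to at least two leaves; say $u$ is adjacent to $a$ and $b$.

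This remaining case is the heart of the matter, and where I expect the real difficulty to lie. The obstruction is that $u$ and $c$ are now both adjacent to both $a$ and $b$, so there is a $4$-cycle through $u,a,c,b$, and one checks that no induced fork appears among $\{u',u,c,a,b,d\}$ alone. To break the symmetry I would invoke primeness of $G$: the two-element set $\{a,b\}$ is not a module of $G$, so some vertex $f\notin\{a,b\}$ is adjacent to exactly one of $a,b$; say $f\sim a$ and $f\not\sim b$, and note $f\notin\{c,d,u\}$. If moreover $f\not\sim c$ and $f\not\sim d$, then $\{f,a,c,b,d\}$ induces a fork with degree-three vertex $c$, degree-two vertex $a$, and leaves $f,b,d$, and we are done; otherwise $f$ is adjacent to $c$ or to $d$, and one has to push further. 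Each subsequent failure of the module property supplies a new distinguishing vertex but also creates a new near-twin configuration — for instance, if $u$ is adjacent to all three leaves, then $u$ and $c$ are adjacent and have the same neighbors in $\{a,b,d\}$, so primeness of $G$ again produces a vertex separating $u$ from $c$ — and the work is to organize these reductions so that the process is forced to terminate in an induced fork. Primeness of $G'$ also enters: applying the module-breaking step to the leaf set $\{a,b,d\}$ inside the prime graph $G'$ yields a vertex of $G'$, hence another nonneighbor of $x$, which either completes an induced fork of $G$ or further restricts how $a,b,d$ see the rest of $G'$; this extra restriction is genuinely needed, since $G-N[x]$ by itself may contain a claw. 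Carrying this finite but delicate case analysis through to the end is precisely the structural result of Lozin and Milani\v{c}~\cite{MR2373852,lozin2008polynomial} (see also~\cite{milanic2007algorithmic,MR4276974}), whose argument I would follow for the remaining details.
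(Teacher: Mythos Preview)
The paper does not give its own proof of this theorem: it is quoted as a known structural result of Lozin and Milani\v{c}~\cite{MR2373852,lozin2008polynomial} (with pointers also to~\cite{milanic2007algorithmic,MR4276974}), and is used as a black box. Your proposal ultimately does the same thing, explicitly deferring the decisive case analysis to precisely these references. The opening reduction you sketch---using connectedness of the prime graph $G$ to reach the claw via a shortest path from $x$, and disposing of the configurations where the penultimate vertex $u$ meets at most one leaf of the claw---is correct as far as it goes and is indeed how the cited arguments begin; but since you hand off the remaining (and hardest) case to the literature, there is no self-contained proof here to compare against, and in that respect your treatment matches the paper's.
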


Using~\cref{thm:reduction-to-anti-neighborhoods,cor:prime-are-claw-free,thm:fork-free}, we can now derive the following.

\begin{lemma}\label{lem:prime-fork-free}
Given a prime fork-free graph $G$ with $n$ vertices and $m\ge 1$ edges, a unit well-covering system of $G$ with size at most $n$ can be computed in time $\mathcal{O}(n^{\omega+3}m^{3/2})$.
\end{lemma}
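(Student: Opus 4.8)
The plan is to apply \Cref{thm:reduction-to-anti-neighborhoods} with $\G$ the class of all prime fork-free graphs and $\G^*$ the class $\C$ from \Cref{cor:prime-are-claw-free}, that is, the class of graphs all of whose prime induced subgraphs are claw-free. The hypothesis of \Cref{thm:reduction-to-anti-neighborhoods} that $G-N[v]\in\G^*$ for every $G\in\G$ and every $v\in V(G)$ is exactly the content of \Cref{thm:fork-free}: if $G$ is prime fork-free and $x\in V(G)$, then every prime induced subgraph of $G-N[x]$ is claw-free, so $G-N[x]\in\C$. It then remains to supply the functions $f$ and $g$ required by \Cref{thm:reduction-to-anti-neighborhoods}, namely bounds on the running time and on the size of a unit well-covering system computable for graphs in $\C$. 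For $H\in\C$ with $n'$ vertices and $m'\ge 1$ edges, \Cref{cor:prime-are-claw-free} produces such a system of size at most $n'$ in time $\mathcal{O}\big((n')^{\omega+2}(m')^{3/2}\big)$; and if $m'=0$ then $H$ is edgeless, its only maximal independent set is $V(H)$, so the empty system is a unit well-covering system of $H$ of size $0$, computable in time $\mathcal{O}(n')$. Hence one may take $f(n,m)=(n+1)^{\omega+2}(m+1)^{3/2}$ and $g(n,m)=n$, both nondecreasing.

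With these choices, \Cref{thm:reduction-to-anti-neighborhoods} yields, for a prime fork-free graph $G$ with $n$ vertices and $m\ge 1$ edges, a unit well-covering system $\widehat{\W}$ of $G$ of size at most $n\cdot g(n,m)+n-1=n^2+n-1$, computed in time $\mathcal{O}\big(n\cdot(n+m+f(n,m))\big)$. Using $m\ge 1$, $m\le\binom{n}{2}$, and $\omega\ge 2$, this simplifies: the contribution $n\cdot f(n,m)$ is $\mathcal{O}(n^{\omega+3}m^{3/2})$ and dominates $n(n+m)=\mathcal{O}(n^3)=\mathcal{O}(n^{\omega+3}m^{3/2})$, so $\widehat{\W}$ is obtained in time $\mathcal{O}(n^{\omega+3}m^{3/2})$.

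The only remaining issue is that the lemma demands a system of size at most $n$, whereas $|\widehat{\W}|$ may be quadratic in $n$; this is handled by a single Gaussian-elimination pass. By \Cref{lem:reduced-well-covering}, from the rational (indeed unit) system $\widehat{\W}$ of size $\mathcal{O}(n^2)$ one computes in time $\mathcal{O}\big(n^{\omega-1}|\widehat{\W}|\big)=\mathcal{O}(n^{\omega+1})$ a linearly independent well-covering subsystem $\W\subseteq\widehat{\W}$ of $G$ with $|\W|\le\min\{n,|\widehat{\W}|\}\le n$; being a subsystem of a unit system, $\W$ is itself unit. Since $\mathcal{O}(n^{\omega+1})$ is absorbed by $\mathcal{O}(n^{\omega+3}m^{3/2})$ (using $m\ge 1$), the total running time is $\mathcal{O}(n^{\omega+3}m^{3/2})$, as required. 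I do not expect a genuine obstacle here; the two points needing care are checking that \Cref{thm:fork-free} matches the hypothesis of \Cref{thm:reduction-to-anti-neighborhoods} (in particular treating edgeless anti-neighborhoods separately, since \Cref{cor:prime-are-claw-free} assumes $m\ge 1$), and noticing that the size bound coming out of \Cref{thm:reduction-to-anti-neighborhoods} is $\Theta(n^2)$ rather than $n$, so a final reduction via \Cref{lem:reduced-well-covering} is needed.
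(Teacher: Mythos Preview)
Your proposal is correct and follows essentially the same approach as the paper: apply \Cref{thm:reduction-to-anti-neighborhoods} with $\G$ the prime fork-free graphs and $\G^*=\C$ (justified via \Cref{thm:fork-free}), use \Cref{cor:prime-are-claw-free} to supply $f$ and $g$ (handling the edgeless anti-neighborhood case separately), and then reduce the resulting system of size $\mathcal{O}(n^2)$ to one of size at most $n$ via \Cref{lem:reduced-well-covering}. The only cosmetic difference is that the paper absorbs the edgeless case by taking $f(n,m)=n+n^{\omega+2}m^{3/2}$ rather than your $(n+1)^{\omega+2}(m+1)^{3/2}$, which is immaterial.
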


\begin{proof}
Let $\mathcal{F}$ be the class of prime fork-free graphs and let
$\mathcal{F}^*$ be the class of all graphs $G$ such that every prime induced subgraph of $G$ is claw-free.
By~\cref{thm:fork-free}, for every graph $G\in \mathcal{F}$ and every vertex $x\in V(G)$, the graph $G-N[x]$ belongs to $\mathcal{F}^*$.
By~\cref{cor:prime-are-claw-free}, given a graph $G\in \mathcal{F}^*$ with $n$ vertices and $m$ edges one can compute in time \hbox{$\mathcal{O}\big(n+n^{\omega+2}m^{3/2}\big)$} a unit well-covering system of $G$ with size at most $n$, where the additive $\mathcal{O}(n)$ term has only been added in order to allow for $G$ to be edgeless.
Thus, by \cref{thm:reduction-to-anti-neighborhoods}, given a graph $G\in \mathcal{F}$ with $n$ vertices and $m\ge 1$ edges one can compute in time \hbox{$\mathcal{O}\big(n\cdot \big(n+m+n^{\omega+2}m^{3/2}\big)\big) = \mathcal{O}\big(n^{\omega+3}m^{3/2}\big)$} a unit well-covering system $\widehat{\W}$ of $G$ with size at most $n^2+n-1$.
By \cref{lem:reduced-well-covering}, a unit well-covering subsystem $\W\subseteq \widehat{\W}$ of $G$, with size at most $n$, can be computed in time $\mathcal{O}(n^{\omega-1}|\widehat{\W}|)=\mathcal{O}(n^{\omega+1})$.
The total time complexity of this approach is $\mathcal{O}\big(n^{\omega+3}m^{3/2}\big) +\mathcal{O}\big(n^{\omega+1}\big) = \mathcal{O}\big(n^{\omega+3}m^{3/2}\big)$, as claimed.
\end{proof}

We now have everything ready to prove the main result of the paper.

\begin{theorem}\label{thm:main}
Given a fork-free graph $G$ with $n$ vertices and $m\ge 1$ edges, a unit well-covering system of $G$ with size at most $n$ can be computed in time $\mathcal{O}(n^{\omega+3}m^{3/2})$.
\end{theorem}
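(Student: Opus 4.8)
The plan is to apply \Cref{thm:modular+Gauss} with $\G$ taken to be the class of all fork-free graphs. Since being fork-free is a hereditary property, every induced subgraph of a fork-free graph is itself fork-free; consequently the class $\G^*$ of all prime induced subgraphs of graphs in $\G$ coincides with the class of prime fork-free graphs. This is exactly the class addressed by \Cref{lem:prime-fork-free}, which provides, for every prime fork-free graph with $n$ vertices and $m\ge 1$ edges, a unit well-covering system of size at most $n$ computable in time $\mathcal{O}(n^{\omega+3}m^{3/2})$. In the application of \Cref{thm:modular+Gauss} one only needs such a subroutine for the prime graphs $H_t$ arising at prime nodes of the modular decomposition tree, and these are connected and co-connected on at least two (in fact at least four) vertices, so the hypothesis $m\ge 1$ is never violated; the one-vertex graphs at the leaves are handled with the empty system.

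Next I would verify that the function $f$ defined by $f(n,m) = n^{\omega+3}m^{3/2}$ satisfies the hypothesis of \Cref{thm:modular+Gauss}, namely that it is superadditive (and hence nondecreasing). This follows from the elementary inequality $a^c+b^c\le (a+b)^c$, valid for all $a,b\ge 0$ and $c\ge 1$, applied with exponent $\omega+3$ in the first coordinate and $3/2$ in the second:
\[
n_1^{\omega+3}m_1^{3/2} + n_2^{\omega+3}m_2^{3/2}
\le \bigl(n_1^{\omega+3}+n_2^{\omega+3}\bigr)\bigl(m_1^{3/2}+m_2^{3/2}\bigr)
\le (n_1+n_2)^{\omega+3}(m_1+m_2)^{3/2},
\]
which is the same kind of computation as the one recorded before \Cref{cor:prime-are-claw-free} for $n^{\omega+2}m^{3/2}$.

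With these two points in place, \Cref{thm:modular+Gauss} applies directly and yields an algorithm that, given a fork-free graph $G$ with $n$ vertices and $m$ edges, computes a unit well-covering system of $G$ with size at most $n$ in time $\mathcal{O}\bigl(f(2n,m)+n^{\omega+1}\bigr)$. It then remains only to simplify this bound: $f(2n,m) = 2^{\omega+3}\,n^{\omega+3}m^{3/2} = \mathcal{O}(n^{\omega+3}m^{3/2})$, and since $m\ge 1$ we also have $n^{\omega+1}\le n^{\omega+3}\le n^{\omega+3}m^{3/2}$, so the total running time is $\mathcal{O}(n^{\omega+3}m^{3/2})$, as claimed.

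I do not anticipate a genuine obstacle in this argument: all of the substantive work has already been carried out in \Cref{thm:modular+Gauss} (the modular-decomposition and Gaussian-elimination reduction to prime induced subgraphs) and in \Cref{lem:prime-fork-free} (which, via the structural result \Cref{thm:fork-free} and the Levit--Tankus algorithm, disposes of the prime fork-free case). The only things needing care are the hereditariness observation that lets \Cref{lem:prime-fork-free} serve as the base-case subroutine inside \Cref{thm:modular+Gauss}, the routine superadditivity check for $f$, and the bookkeeping that the prime graphs on which the subroutine is invoked indeed have at least one edge --- if any step deserves to be called the ``hard part,'' it is simply confirming that the hypotheses of \Cref{thm:modular+Gauss} are satisfied.
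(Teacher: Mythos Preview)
Your proposal is correct and follows essentially the same route as the paper: apply \Cref{thm:modular+Gauss} with $\G$ the class of fork-free graphs, use \Cref{lem:prime-fork-free} as the subroutine on prime fork-free graphs with $f(n,m)=n^{\omega+3}m^{3/2}$, and simplify the resulting bound. If anything, you are more careful than the paper's own proof, which glosses over the superadditivity verification and the $m\ge 1$ issue at prime nodes.
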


\begin{proof}
Let $\G$ be the class of fork-free graphs and $\G^*$ the class of prime fork-free graphs.
\cref{lem:prime-fork-free} implies that given a graph $G$ in $\G^*$ with $n$ vertices and $m\ge 1$ edges, a unit well-covering system of $G$ with size at most $n$ can be computed in time $\mathcal{O}(n^{\omega+3}m^{3/2})$.
Let $f(n,m)=n^{\omega+3}m^{3/2}.$
By \cref{thm:modular+Gauss}, given a fork-free graph $G$ with $n$ vertices and $m\ge 1$ edges, a unit well-covering system $\W$ of $G$ with size at most $n$ can be computed in time $\mathcal{O}(f(2n,m)+n^{\omega+1})=\mathcal{O}((2n)^{\omega+3}m^{3/2}+n^{\omega+1})$, which simplifies to $\mathcal{O}(n^{\omega+3}m^{3/2})$.
\end{proof}

We can determine if a graph $G$ is well-covered by computing a well-covering system of $G$ and checking if the weight function assigning $1$ to each vertex of $G$ satisfies all the equations in the system.
This leads to the following consequence of~\Cref{thm:main}.

\begin{corollary}
There is a polynomial-time algorithm to determine if a given fork-free graph is well-covered.
\end{corollary}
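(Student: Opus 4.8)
The plan is to derive the statement directly from \Cref{thm:main} together with the standard characterization of well-coveredness recalled in the introduction: a graph $G$ is well-covered if and only if the vertex weight function that is constantly equal to $1$ belongs to the well-covered vector space $\WCW{G}$. So, given a fork-free graph $G$ with $n$ vertices and $m$ edges, I would first dispose of the degenerate case $m = 0$: an edgeless graph has $V(G)$ as its unique maximal independent set and is therefore trivially well-covered. Assuming $m \ge 1$, I would invoke \Cref{thm:main} to compute, in time $\mathcal{O}(n^{\omega+3}m^{3/2})$, a (unit) well-covering system $\W$ of $G$ with at most $n$ equations.

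The remaining step is the verification. By the very definition of a well-covering system, a vector $w \in \mathbb{R}^n$ lies in $\WCW{G}$ if and only if it satisfies every equation of $\W$. Applying this to $w = (1,\dots,1)^\top$, the graph $G$ is well-covered if and only if substituting $x_v = 1$ for every $v \in V(G)$ into each equation of $\W$ yields $0$. Since $\W$ has at most $n$ equations and each equation involves at most $n$ variables, this check runs in time $\mathcal{O}(n^2)$. Hence the whole procedure — compute $\W$ via \Cref{thm:main}, then test the all-ones substitution — runs in polynomial time, which proves the corollary.

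There is no genuine obstacle here; the only points requiring a word of care are the boundary case $m = 0$ (handled above) and making explicit that the substitution-and-check step is polynomial, both of which are immediate. The substance of the result is entirely contained in \Cref{thm:main}, and this corollary merely records that a well-covering system, once available, makes recognition of well-coveredness a triviality.
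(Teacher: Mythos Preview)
Your proof is correct and follows essentially the same approach as the paper: compute a well-covering system of $G$ via \Cref{thm:main} and then check whether the all-ones vector satisfies every equation in the system. Your treatment is slightly more detailed than the paper's (you explicitly handle the edgeless case $m=0$ and bound the cost of the verification step), but the argument is the same.
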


\section{Concluding remarks}\label{sec:conclusion}

In this paper we developed two general reductions for the problem of computing a well-covering system of a given graph, that is, a system of linear homogeneous equations representing the well-covered vector space of the graph.
Using these reductions, we showed that the problem can be solved in polynomial time in the class of fork-free graphs.
For the special case of cographs, a faster algorithm was developed.

As a promising avenue for future research, it would be interesting to study the problem in further generalizations of the class of cographs, for example, in the classes considered \hbox{in~\cite{MR3851408,Araujo2019}}, including classes of bounded cliquewidth, in which the well-coveredness property can be recognized in \FPT time (with cliquewidth as the parameter, see~\cite{MR3851408}).
The complexity of computing the well-covered dimension of a graph, as well as the special case of recognizing graphs with positive well-covered dimension also seem to
be questions worthy of further consideration.

\paragraph{Statements and Declarations.}
\textit{Declaration of interest:} The authors have no conflicts of interest to declare that are relevant to the content of this article.

\paragraph{Acknowledgements.}
We are grateful to the anonymous reviewer for helpful remarks.
This work is supported in part by the Slovenian Research and Innovation Agency (I0-0035, research program P1-0285, research projects J1-3001, J1-3002, J1-3003, J1-4008, J1-4084, N1-0102, and N1-0160, and a Young Researchers Grant) and by the research program CogniCom (0013103) at the University of Primorska.


\begin{thebibliography}{10}

\bibitem{MR4262465}
J.~Alman and V.~Vassilevska~Williams.
\newblock A refined laser method and faster matrix multiplication.
\newblock In {\em Proceedings of the 2021 {ACM}-{SIAM} {S}ymposium on
  {D}iscrete {A}lgorithms ({SODA})}, pages 522--539. [Society for Industrial
  and Applied Mathematics (SIAM)], Philadelphia, PA, 2021.

\bibitem{MR3851408}
S.~R. Alves, K.~K. Dabrowski, L.~Faria, S.~Klein, I.~Sau, and U.~S. Souza.
\newblock On the (parameterized) complexity of recognizing well-covered
  {$(r,\ell)$}-graph.
\newblock {\em Theoret. Comput. Sci.}, 746:36--48, 2018.

\bibitem{Araujo2019}
R.~Ara\'{u}jo, E.~Costa, S.~Klein, R.~Sampaio, and U.~S. Souza.
\newblock F{PT} algorithms to recognize well covered graphs.
\newblock {\em Discrete Math. Theor. Comput. Sci.}, 21(1):Paper NO. 3, 15,
  2019.

\bibitem{zbMATH03813669}
C.~{Berge}.
\newblock {Stochastic graphs and strongly perfect graphs - a survey}.
\newblock {\em {Southeast Asian Bull. Math.}}, 7:16--25, 1983.

\bibitem{MR3268692}
I.~Birnbaum, M.~Kuneli, R.~McDonald, K.~Urabe, and O.~Vega.
\newblock The well-covered dimension of products of graphs.
\newblock {\em Discuss. Math. Graph Theory}, 34(4):811--827, 2014.

\bibitem{zbMATH01289516}
A.~{Brandst\"adt}, {Van Bang Le}, and J.~P. {Spinrad}.
\newblock {\em {Graph classes: a survey}}, volume~3.
\newblock Philadelphia, PA: SIAM, 1999.

\bibitem{MR2739910}
J.~Brown and R.~Hoshino.
\newblock Well-covered circulant graphs.
\newblock {\em Discrete Math.}, 311(4):244--251, 2011.

\bibitem{zbMATH05029664}
J.~I. {Brown} and R.~J. {Nowakowski}.
\newblock {Well-covered vector spaces of graphs}.
\newblock {\em {SIAM J. Discrete Math.}}, 19(4):952--965, 2006.

\bibitem{brown2007structure}
J.~I. Brown, R.~J. Nowakowski, and I.~E. Zverovich.
\newblock The structure of well-covered graphs with no cycles of length 4.
\newblock {\em Discrete Mathematics}, 307(17-18):2235--2245, 2007.

\bibitem{zbMATH01200865}
Y.~{Caro}, M.~N. {Ellingham}, and J.~E. {Ramey}.
\newblock {Local structure when all maximal independent sets have equal
  weight}.
\newblock {\em {SIAM J. Discrete Math.}}, 11(4):644--654, 1998.

\bibitem{zbMATH00845871}
Y.~{Caro}, A.~{Seb\H{o}}, and M.~{Tarsi}.
\newblock {Recognizing greedy structures}.
\newblock {\em {J. Algorithms}}, 20(1):137--156, 1996.

\bibitem{zbMATH01334515}
Y.~{Caro} and R.~{Yuster}.
\newblock {The uniformity space of hypergraphs and its applications}.
\newblock {\em {Discrete Math.}}, 202(1-3):1--19, 1999.

\bibitem{MR3124680}
H.~Y. Cheung, T.~C. Kwok, and L.~C. Lau.
\newblock Fast matrix rank algorithms and applications.
\newblock {\em J. ACM}, 60(5):Art. 31, 25, 2013.

\bibitem{zbMATH00434906}
V.~{Chv\'atal} and P.~J. {Slater}.
\newblock {A note on well-covered graphs}.
\newblock In {\em Quo vadis, graph theory? A source book for challenges and
  directions}, pages 179--181. Amsterdam: North-Holland, 1993.

\bibitem{MR619603}
D.~G. Corneil, H.~Lerchs, and L.~S. Burlingham.
\newblock Complement reducible graphs.
\newblock {\em Discrete Appl. Math.}, 3(3):163--174, 1981.

\bibitem{MR1264476}
N.~Dean and J.~Zito.
\newblock Well-covered graphs and extendability.
\newblock {\em Discrete Math.}, 126(1-3):67--80, 1994.

\bibitem{debski2022computing}
M.~D{\k{e}}bski, Z.~Lonc, K.~Okrasa, M.~Piecyk, and P.~Rz{\k{a}}{\.z}ewski.
\newblock {Computing Homomorphisms in Hereditary Graph Classes: The Peculiar
  Case of the 5-Wheel and Graphs with No Long Claws}.
\newblock In S.~W. Bae and H.~Park, editors, {\em 33rd International Symposium
  on Algorithms and Computation (ISAAC 2022)}, volume 248 of {\em Leibniz
  International Proceedings in Informatics (LIPIcs)}, pages 14:1--14:16,
  Dagstuhl, Germany, 2022. Schloss Dagstuhl -- Leibniz-Zentrum f{\"u}r
  Informatik.

\bibitem{MR4276974}
M.~Dyer, M.~Jerrum, H.~M\"{u}ller, and K.~Vu{\v{s}}kovi\'{c}.
\newblock Counting weighted independent sets beyond the permanent.
\newblock {\em SIAM J. Discrete Math.}, 35(2):1503--1524, 2021.

\bibitem{zbMATH03799694}
O.~{Favaron}.
\newblock {Very well covered graphs}.
\newblock {\em {Discrete Math.}}, 42:177--187, 1982.

\bibitem{zbMATH00130578}
A.~{Finbow}, B.~{Hartnell}, and R.~J. {Nowakowski}.
\newblock {A characterization of well covered graphs of girth 5 or greater}.
\newblock {\em {J. Comb. Theory, Ser. B}}, 57(1):44--68, 1993.

\bibitem{zbMATH00682532}
A.~{Finbow}, B.~{Hartnell}, and R.~J. {Nowakowski}.
\newblock {A characterization of well-covered graphs that contain neither 4-
  nor 5- cycles}.
\newblock {\em {J. Graph Theory}}, 18(7):713--721, 1994.

\bibitem{habib2010survey}
M.~Habib and C.~Paul.
\newblock A survey of the algorithmic aspects of modular decomposition.
\newblock {\em Computer Science Review}, 4(1):41--59, 2010.

\bibitem{MR1677797}
B.~L. Hartnell.
\newblock Well-covered graphs.
\newblock {\em J. Combin. Math. Combin. Comput.}, 29:107--115, 1999.

\bibitem{MR3404664}
J.~Hauschild, J.~Ortiz, and O.~Vega.
\newblock On the {L}evi graph of point-line configurations.
\newblock {\em Involve}, 8(5):893--900, 2015.

\bibitem{MR0378476}
R.~M. Karp.
\newblock Reducibility among combinatorial problems.
\newblock In {\em Complexity of computer computations ({P}roc. {S}ympos., {IBM}
  {T}homas {J}. {W}atson {R}es. {C}enter, {Y}orktown {H}eights, {N}.{Y}.,
  1972)}, pages 85--103. Plenum, New York, 1972.

\bibitem{Klein2013}
S.~Klein, C.~P. de~Mello, and A.~Morgana.
\newblock Recognizing well covered graphs of families with special
  {$P_4$}-components.
\newblock {\em Graphs Combin.}, 29(3):553--567, 2013.

\bibitem{levit2015weighted}
V.~E. Levit and D.~Tankus.
\newblock Weighted well-covered claw-free graphs.
\newblock {\em Discrete Mathematics}, 338(3):99--106, 2015.

\bibitem{MR3325553}
V.~E. Levit and D.~Tankus.
\newblock Well-covered graphs without cycles of lengths 4, 5 and 6.
\newblock {\em Discrete Appl. Math.}, 186:158--167, 2015.

\bibitem{lozin2008polynomial}
V.~V. Lozin and M.~Milani{\v{c}}.
\newblock A polynomial algorithm to find an independent set of maximum weight
  in a fork-free graph.
\newblock {\em Journal of Discrete Algorithms}, 6(4):595--604, 2008.

\bibitem{MR2373852}
V.~V. Lozin and M.~Milani\v{c}.
\newblock A polynomial algorithm to find an independent set of maximum weight
  in a fork-free graph.
\newblock In {\em Proceedings of the {S}eventeenth {A}nnual {ACM}-{SIAM}
  {S}ymposium on {D}iscrete {A}lgorithms}, pages 26--30. ACM, New York, 2006.

\bibitem{MR1268776}
N.~V.~R. Mahadev, U.~N. Peled, and F.~Sun.
\newblock Equistable graphs.
\newblock {\em J. Graph Theory}, 18(3):281--299, 1994.

\bibitem{MR1687819}
R.~M. McConnell and J.~P. Spinrad.
\newblock Modular decomposition and transitive orientation.
\newblock {\em Discrete Math.}, 201(1-3):189--241, 1999.

\bibitem{milanic2007algorithmic}
M.~Milani\v{c}.
\newblock {\em Algorithmic developments and complexity results for finding
  maximum and exact independent sets in graphs}.
\newblock PhD thesis, Rutgers University, 2007.

\bibitem{MR553649}
C.~Payan.
\newblock A class of threshold and domishold graphs: equistable and
  equidominating graphs.
\newblock {\em Discrete Math.}, 29(1):47--52, 1980.

\bibitem{zbMATH03310754}
M.~D. {Plummer}.
\newblock {Some covering concepts in graphs}.
\newblock {\em {J. Comb. Theory}}, 8:91--98, 1970.

\bibitem{MR1254158}
M.~D. Plummer.
\newblock Well-covered graphs: a survey.
\newblock {\em Quaestiones Math.}, 16(3):253--287, 1993.

\bibitem{zbMATH00845475}
E.~{Prisner}, J.~{Topp}, and P.~D. {Vestergaard}.
\newblock {Well covered simplicial, chordal, and circular arc graphs}.
\newblock {\em {J. Graph Theory}}, 21(2):113--119, 1996.

\bibitem{zbMATH03614795}
G.~{Ravindra}.
\newblock {Well-covered graphs}.
\newblock {\em {J. Comb. Inf. Syst. Sci.}}, 2:20--21, 1977.

\bibitem{zbMATH00039777}
R.~S. {Sankaranarayana} and L.~K. {Stewart}.
\newblock {Complexity results for well-covered graphs}.
\newblock {\em {Networks}}, 22(3):247--262, 1992.

\bibitem{zbMATH00934795}
D.~{Tankus} and M.~{Tarsi}.
\newblock {Well-covered claw-free graphs}.
\newblock {\em {J. Comb. Theory, Ser. B}}, 66(2):293--302, 1996.

\bibitem{zbMATH01006762}
D.~{Tankus} and M.~{Tarsi}.
\newblock {The structure of well-covered graphs and the complexity of their
  recognition problems}.
\newblock {\em {J. Comb. Theory, Ser. B}}, 69(2):230--233, 1997.

\bibitem{MR2500307}
M.~Tedder, D.~Corneil, M.~Habib, and C.~Paul.
\newblock Simpler linear-time modular decomposition via recursive factorizing
  permutations.
\newblock In {\em Automata, languages and programming. {P}art {I}}, volume 5125
  of {\em Lecture Notes in Comput. Sci.}, pages 634--645. Springer, Berlin,
  2008.

\end{thebibliography}
\end{document}